\numberwithin{equation}{section}
\theoremstyle{plain}
\newtheorem{theorem}{Theorem}[section]
\newtheorem{lemma}[theorem]{Lemma}
\newtheorem{corollary}[theorem]{Corollary}
\theoremstyle{definition}
\newtheorem{remark}[theorem]{Remark}
\newcommand{\mr}{\mathbb{R}}
\newcommand{\ud}{\mathrm{d}}
\begin{document}
		\title{\Large \bf A Liouville-type theorem in conformally invariant equations}
	\author{ Mingxiang Li\thanks{M. Li, Email: limx@smail.nju.edu.cn. } }
	\affil{Department of Mathematics, Nanjing University, Nanjing 210093, P. R. China}
	\date{}
	\maketitle
\begin{abstract}
	Given a smooth function $K(x)$ satisfying a polynomially cone condition and $x\cdot\nabla K\leq 0$, we prove that there is no solution $u\in C^\infty(\mr^2)$ of the equation
	$$-\Delta u=K(x)e^{2u}\quad \mathrm{on}\;\mr^2$$
	with $u\leq C$  and $\int_{\mr ^2}|K(x)|e^{2u}\ud x<+\infty$.  As a consequence, there is no such solution if $K(x)$ is a non-constant polynomial with $x\cdot\nabla K\leq 0$. The latter result already includes a result of Struwe(JEMS 2020) as a particular case. Higher order cases are set up with additional assumption on the behavior of $\Delta u$ near infinity. 
\end{abstract}	

{\bf Keywords: } Liouville theorem,  Asymptotic behavior.
\medskip

{\bf MSC2020: } 35J20, 58E30.
\section{Introduction}
Recently,  Borer, Galimberti and   Struwe \cite{BGS} studied the asymptotic behavior of the metrics of a family of sign-changing prescribed Gaussian curvature on $(M,g)$ with $\chi(M)<0$ and then Galimberti \cite{Ga} studied it with $\chi(M)=0$. Their blow-up analysis led possibly to either
solutions of the standard Liouville equation $-\Delta u=e^{2u}$ in $\mr^2$ or  solutions to
$$-\Delta u=(1+(Ax,x))e^{2u}\quad \mathrm{in}\;\mr^2$$
with $\int_{\mr^2}(1+|A(x,x)|)e^{2u}\ud x<\infty$ where $A$ is a negative definite $2\times 2$-matrix. Later, Struwe \cite{Struwe20} ruled out the possibility of the later case. In \cite{Struwe20}, Struwe established an  elegant result as follows.
\begin{theorem}\label{thm: Struwe's theorem}(Theorem 1.3 in \cite{Struwe20})
	Suppose $A$ is a negative definite and symmetric $2\times 2$-matrix. Then there is no solution $u\in C^{\infty}(\mr^2)$ of the equation
	$$-\Delta u=(1+(Ax,x))e^{2u}\quad \mathrm{on}\;\mr^2$$
with $u\leq C$ and such that the induced metric $h=e^{2u}g_{\mr^2}$ has finite volume and integrated curvature
$$\int_{\mr ^2}e^{2u}\ud x<\infty,\quad \int_{\mr ^2}(1+(Ax,x))e^{2u}\ud x\in \mr.$$	
\end{theorem}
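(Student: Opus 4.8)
The plan is to combine a Pohozaev identity with the precise asymptotics of $u$ at infinity and to reach a contradiction through the value of the total curvature
\[
\alpha:=\frac1{2\pi}\int_{\mr^2}K(x)e^{2u(x)}\,\ud x,\qquad K(x)=1+(Ax,x).
\]
\emph{Step 1 (normalisation and asymptotics, the main obstacle).} Since $A$ is negative definite, $K$ is bounded above, positive exactly on a bounded ellipse $D$, and $c_1|x|^2\le |K(x)|\le c_2|x|^2$ for $|x|\ge R_0$. From $u\le C$ we get $e^{2u}\le e^{2C}$, hence $\int_{\mr^2}K^+e^{2u}\le e^{2C}\int_{\mr^2}e^{2u}<\infty$, and since $\int_{\mr^2}Ke^{2u}$ is finite this forces $\int_{\mr^2}|K|e^{2u}\,\ud x<\infty$; in particular $\alpha\in\mr$. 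With the renormalised Newtonian potential
\[
w(x):=-\frac1{2\pi}\int_{\mr^2}\bigl(\log|x-y|-\log(|y|+1)\bigr)K(y)e^{2u(y)}\,\ud y,
\]
which is finite and satisfies $-\Delta w=Ke^{2u}$, the difference $u-w$ is harmonic on $\mr^2$. The goal is the expansion
\[
u(x)=-\alpha\log|x|+O(1),\qquad \nabla u(x)=-\alpha\,\frac{x}{|x|^2}+o\!\Bigl(\tfrac1{|x|}\Bigr)\quad(|x|\to\infty),
\]
with the little-$o$ uniform on circles $\{|x|=R\}$. I would obtain it as follows: off the compact set $\overline D$ one has $\Delta u=-Ke^{2u}\ge0$, so $u$ is subharmonic there; comparing $u(x)$ with its mean over $B_{|x|/4}(x)$, using $|K|\asymp|x|^2$ and the vanishing of tails of $\int|K|e^{2u}$, gives $u(x)\le-2\log|x|+o(\log|x|)$ and, after a bootstrap, $\int_{\mr^2}(1+\log(|y|+1))|K|e^{2u}\,\ud y<\infty$; feeding this back into $u=w+\mathrm{const}$ (the harmonic part is a constant since, once the decay of $Ke^{2u}$ is known, $w$ has at most logarithmic growth in both directions, and a harmonic function on $\mr^2$ bounded above by $C\log(2+|x|)+C$ is constant by the Poisson formula) and differentiating yields the expansion. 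I expect this step, and especially the gradient estimate, to be the genuinely hard part: it is exactly the ``asymptotic behaviour'' analysis, and it is where $\int|K|e^{2u}<\infty$ together with the polynomial growth of $|K|$ is used in an essential way.

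\emph{Step 2 (Pohozaev identity: $\alpha<2$).} Multiplying $-\Delta u=Ke^{2u}$ by $x\cdot\nabla u$, integrating over $B_R$, and using the two-dimensional Rellich identity for $\Delta u$ together with $Ke^{2u}(x\cdot\nabla u)=\tfrac12\operatorname{div}(xKe^{2u})-\tfrac12e^{2u}(2K+x\cdot\nabla K)$, one gets
\[
\tfrac12\int_{B_R}(2K+x\cdot\nabla K)e^{2u}\,\ud x
=\tfrac R2\int_{\partial B_R}Ke^{2u}\,\ud\sigma
+\int_{\partial B_R}\Bigl[(x\cdot\nabla u)\,\partial_\nu u-\tfrac R2|\nabla u|^2\Bigr]\ud\sigma .
\]
Because $\int_0^\infty\bigl(\int_{\partial B_r}|K|e^{2u}\,\ud\sigma\bigr)\ud r<\infty$, there is $R_k\to\infty$ with $R_k\int_{\partial B_{R_k}}|K|e^{2u}\,\ud\sigma\to0$, so the first boundary term vanishes along $R_k$; by Step 1 the second boundary term converges to $\pi\alpha^2$; and the left-hand side converges to $\tfrac12\int_{\mr^2}(2K+x\cdot\nabla K)e^{2u}$ by dominated convergence, using $|x\cdot\nabla K|=2|(Ax,x)|\le C(|K|+1)$. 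Hence
\[
\int_{\mr^2}(2K+x\cdot\nabla K)e^{2u}\,\ud x=2\pi\alpha^2 .
\]
Since $A$ is negative definite, $x\cdot\nabla K=2(Ax,x)<0$ for $x\neq0$, so the left side is \emph{strictly} below $\int_{\mr^2}2Ke^{2u}=4\pi\alpha$; therefore $2\pi\alpha^2<4\pi\alpha$, which (an $\alpha\le0$ would give $\alpha^2\ge2\alpha$) yields $0<\alpha<2$.

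\emph{Step 3 (integrability: $\alpha>2$, and conclusion).} By Step 1, $u(x)\ge-\alpha\log|x|-C$ for large $|x|$, so $e^{2u(x)}\ge c_0|x|^{-2\alpha}$, and then
\[
\int_{\mr^2}|K|e^{2u}\,\ud x\ \ge\ c_0c_1\int_{|x|\ge R_0}|x|^{2-2\alpha}\,\ud x\ =\ 2\pi c_0c_1\int_{R_0}^\infty r^{3-2\alpha}\,\ud r ,
\]
which is finite only if $\alpha>2$. This contradicts $\alpha<2$ from Step 2, so no such $u$ exists. (Equivalently, the subharmonic bound $u(x)\le-2\log|x|+o(\log|x|)$ from Step 1 combined with $u(x)=-\alpha\log|x|+O(1)$ already gives $\alpha\ge2$, again incompatible with Step 2; the hypothesis $x\cdot\nabla K\le0$ is used only to make the inequality in Step 2 one-sided.)
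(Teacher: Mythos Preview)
Your overall strategy matches the paper's: reduce to the integral representation (showing $u-w$ is harmonic and bounded above by $C\log(2+|x|)$, hence constant), obtain the Pohozaev identity $\frac{1}{2\pi}\int_{\mr^2} x\cdot\nabla K\,e^{2u}\,\ud x=\alpha(\alpha-2)$, conclude $0<\alpha<2$ from $x\cdot\nabla K=2(Ax,x)<0$, and derive $\alpha\ge 2$ from the decay of $u$ forced by $|K|e^{2u}\in L^1$. The implementations differ in the two key steps. For Pohozaev you multiply the PDE by $x\cdot\nabla u$ and need the uniform gradient expansion $\nabla u=-\alpha\,x/|x|^{2}+o(|x|^{-1})$ to evaluate the boundary term; the paper instead derives the identity \emph{directly from the integral equation} (computing $\langle x,\nabla w\rangle$ from the kernel, multiplying by $Ke^{2u}$, and symmetrising in $(x,y)$ following Xu), so that only the one-sided pointwise bound $w\le-\alpha\log|x|+C$ is required and no gradient asymptotic ever enters. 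For $\alpha\ge 2$ your primary argument relies on the \emph{lower} bound $u\ge-\alpha\log|x|-C$, which is additional work not carried out in the paper; your parenthetical alternative (subharmonicity off $\overline D$ plus Jensen on balls where $|K|\ge c|x|^{2}$) is precisely the paper's mechanism, applied there to the integral representation over a ball sitting in the cone. In short, your proof is correct and close in spirit, but invests effort in the sharp two-sided and gradient asymptotics you rightly flagged as the hard part; the paper's integral-equation Pohozaev and Jensen-in-the-cone argument are engineered to bypass exactly those difficulties.
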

Inspired by the above theorem, we establish our main theorem in current paper.
\begin{theorem}\label{thm:main theorem of order 2}
	Suppose $p(x)$ is a non-constant polynomial with $x\cdot\nabla p\leq 0$. Then there is no solution $u\in C^{\infty}(\mr^2)$ of the equation
	\begin{equation*}
		-\Delta u=p(x)e^{2u}\quad \mathrm{on}\;\mr^2
	\end{equation*}
 with $u\leq C$ and such that the induced metric $h=e^{2u}g_{\mr^2}$ has finite total curvature
$$\int_{\mr ^2}|p(x)|e^{2u}\ud x<\infty.$$
\end{theorem}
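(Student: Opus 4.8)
The plan is to combine precise asymptotics of $u$ at infinity with a Pohozaev-type identity, the sign condition $x\cdot\nabla p\le 0$ entering only at the very last step. Two algebraic remarks about $p$ come first. Write $d=\deg p$ and let $p_d$ be the top-degree homogeneous part of $p$; then $x\cdot\nabla p$ is a polynomial of degree $d$ whose leading homogeneous part is $d\,p_d$. Since a polynomial that is $\le 0$ on all of $\mr^2$ necessarily has its leading homogeneous part $\le 0$, the hypothesis $x\cdot\nabla p\le 0$ forces $p_d\le 0$ on $\mr^2$. Also $d\ge 2$: if $\deg p=1$ then $p=a\cdot x+b$ with $a\ne 0$ and $x\cdot\nabla p=a\cdot x$ changes sign, while $\deg p=0$ is excluded by hypothesis. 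Because $p$ is non-constant we have $p_d\not\equiv 0$, so $\mathcal C:=\{p_d<0\}$ is a non-empty open cone, and on any sub-cone $\mathcal C'\Subset\mathcal C$ one has $p(x)\le\tfrac12 p_d(x)<0$, hence $|p(x)|\ge c\,|x|^{d}$, once $|x|$ is large.

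Next I would determine the behaviour of $u$ near infinity. Set $\beta:=\frac1{2\pi}\int_{\mr^2}p\,e^{2u}\,\ud x$, which is finite since $p\,e^{2u}\in L^1(\mr^2)$. Representing $u$ by the logarithmic potential of $p\,e^{2u}$ and using that an entire harmonic function bounded above by $A+B\ln(2+|x|)$ is constant (this is the classical asymptotic analysis for $-\Delta u=Ve^{2u}$ with $Ve^{2u}\in L^1$), one expects
\[
u(x)=-\beta\ln|x|+O(1),\qquad \nabla u(x)=-\beta\,\frac{x}{|x|^{2}}+o\!\Big(\frac1{|x|}\Big)\qquad(|x|\to\infty),
\]
so in particular $e^{2u(x)}\asymp|x|^{-2\beta}$. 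This forces $\beta>0$ (otherwise $u$ is bounded below on $\mathcal C$ and $\int|p|e^{2u}=\infty$), and then, integrating $|p|\,e^{2u}\gtrsim|x|^{\,d-2\beta}$ over $\mathcal C'$, the finiteness of $\int_{\mathcal C'}|p|e^{2u}$ yields $\int^{\infty}r^{\,d-2\beta+1}\,\ud r<\infty$, i.e.\ $2\beta>d+2$, hence $\beta>\tfrac d2+1\ge 2$.

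With these facts I would run the Pohozaev identity. Multiplying $-\Delta u=p\,e^{2u}$ by $x\cdot\nabla u$, integrating over $B_R$ and integrating by parts gives
\[
\frac R2\int_{\partial B_R}\!\big(|\nabla_T u|^{2}-(\partial_\nu u)^{2}\big)\,\ud\sigma
=-\int_{B_R}p\,e^{2u}\,\ud x+\frac R2\int_{\partial B_R}p\,e^{2u}\,\ud\sigma-\frac12\int_{B_R}(x\cdot\nabla p)\,e^{2u}\,\ud x,
\]
with $\nabla_T$ the tangential gradient. Since $\int_0^\infty\!\big(\int_{\partial B_r}|p|e^{2u}\,\ud\sigma\big)\ud r=\int_{\mr^2}|p|e^{2u}<\infty$, one can pick $R_k\to\infty$ with $R_k\int_{\partial B_{R_k}}|p|e^{2u}\to 0$; along this sequence the middle boundary term vanishes, $\int_{B_{R_k}}p\,e^{2u}\to 2\pi\beta$, and the gradient asymptotics give $\tfrac{R_k}{2}\int_{\partial B_{R_k}}(|\nabla_T u|^{2}-(\partial_\nu u)^{2})\to-\pi\beta^{2}$. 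On the other hand $-\tfrac12\int_{B_R}(x\cdot\nabla p)\,e^{2u}$ is nonnegative (because $x\cdot\nabla p\le 0$) and nondecreasing in $R$, with limit $\gamma\in[0,+\infty]$. Passing to the limit in the identity forces $\gamma=\pi\beta(2-\beta)$, which is \emph{strictly negative} since $\beta>2$ --- a contradiction; and if $\gamma=+\infty$ the right-hand side diverges while the left-hand side stays bounded, again impossible. Hence no such $u$ exists. (The argument is robust at the endpoint: even if one only had $\beta\ge 2$, the borderline $\beta=2$ gives $\gamma=0$, and since $e^{2u}>0$ everywhere this forces $x\cdot\nabla p\equiv 0$, i.e.\ $p$ constant, again a contradiction.)

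The main obstacle is the asymptotic analysis of the second paragraph, especially the refined gradient bound $\nabla u(x)=-\beta x/|x|^{2}+o(1/|x|)$ with the coefficient $p$ \emph{unbounded}: one must estimate the logarithmic potential of $p\,e^{2u}$ on the annulus $\{|x|/2\le|y|\le 2|x|\}$, and there the decay $|p(y)|e^{2u(y)}\lesssim|y|^{\,d-2\beta}$ together with the inequality $\beta>\tfrac d2+1$ just established is exactly what makes the contribution $o(1/|x|)$. Everything else --- the two algebraic observations about $p$, the by-parts computation, and the concluding sign argument --- is comparatively short. Note finally that this argument contains Struwe's theorem (the case $p(x)=1+(Ax,x)$ with $A$ negative definite) as a special case and, in contrast to it, never uses the finite-volume hypothesis $\int_{\mr^2}e^{2u}\,\ud x<\infty$.
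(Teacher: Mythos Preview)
Your overall strategy coincides with the paper's: pass to the logarithmic potential $\tilde u$, show $u-\tilde u$ is a harmonic function bounded above by $C+C\log(2+|x|)$ and hence constant, extract the lower bound on $\beta$ from the growth of $|p|$ in a cone, and finish with a Pohozaev identity combined with $x\cdot\nabla p\le 0$. Your two algebraic observations about $p$ (that $d\ge2$ and that $|p|\ge c|x|^d$ on a sub-cone) are precisely what the paper packages as its ``polynomially cone condition'' (PCC).

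The substantive difference is in how the Pohozaev identity is obtained. You multiply the PDE by $x\cdot\nabla u$, integrate by parts on $B_R$, and then need the pointwise gradient asymptotics $\nabla u=-\beta\,x/|x|^2+o(1/|x|)$ to evaluate the boundary term $\tfrac R2\int_{\partial B_R}(|\nabla_Tu|^2-(\partial_\nu u)^2)$. The paper instead follows Xu and derives the identity directly from the \emph{integral} equation: it inserts the formula $\langle x,\nabla w\rangle=-\frac{2}{\omega_n}\int\frac{\langle x,x-y\rangle}{|x-y|^2}Ke^{nw}\,\ud y$ into $\int_{B_R}Ke^{nw}\langle x,\nabla w\rangle$, symmetrises in $(x,y)$, and controls the remaining cross terms using only $|K|e^{nw}\le C|x|^{-n}$ (a consequence of the \emph{upper} bound $w\le-\alpha\log|x|+C$). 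No gradient estimate is needed.

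Your route has a circularity you should watch. To get the $o(1/|x|)$ remainder in $\nabla u$ you invoke the \emph{strict} inequality $\beta>\tfrac d2+1$, and your argument for that strict inequality uses $e^{2u}\gtrsim|x|^{-2\beta}$, i.e.\ the pointwise lower bound $u\ge-\beta\log|x|-C$. But the potential-theoretic analysis (and the paper's Lemma~2.1) delivers only the upper bound $u\le-\beta\log|x|+C$ together with $u/\log|x|\to-\beta$; the matching lower bound is not available before one already knows some extra decay of $|p|e^{2u}$. With only $\beta\ge\tfrac d2+1$ one has $|p|e^{2u}\le C|x|^{-2}$, which is exactly borderline: the annular contribution to $\nabla\tilde u$ near $|y|\sim|x|$ is $O(1/|x|)$ rather than $o(1/|x|)$, and the boundary term need not converge to $-\pi\beta^2$. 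The paper avoids this by (i) proving $\beta\ge 1+\tfrac d2$ via Jensen's inequality on averages (no pointwise lower bound on $u$ needed), and (ii) using the integral-equation Pohozaev, which sidesteps gradients entirely. Your endpoint remark about $\beta=2$ is fine for the \emph{algebra} of the contradiction, but it does not repair the analytic gap in the boundary-term computation.
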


In conformal geometry, the following equation
\begin{equation}\label{equ:conformal equation}
	-\Delta u=K(x)e^{2u}\quad \mathrm{in}\;\mr^2
\end{equation}
plays a very important role. In particular, when $K(x)$ is a  positive constant, the classification theorem in \cite{CL91} gives a very nice description of the solutions with finite volume. When $K(x)\leq 0$ and $K(x)\geq -C/|x|^l$ with $l>2$, Ni \cite{Ni82} showed that the equation \eqref{equ:conformal equation} possesses infinitely many solutions. Later, Cheng and Ni \cite{Cheng-Ni} showed that the equation \eqref{equ:conformal equation} has no solution when $K(x)\leq 0$ and $K(x)\leq -C|x|^{-2}$ near infinity. Not long after that, Chen and Li \cite{CL93} considered $K(x)$ positive near infinity and gave a nice description of the asymptotic behavior of the solutions to \eqref{equ:conformal equation} as well as a generalized Pohozev's  identity similar to Lemma \ref{lem:Pohozaev indentity}.  The property that $K(x)$ is positive near infinity is crucial in their proof. In \cite{CLin}, Cheng and Lin gave a more detailed discussion about the asymptotic behavior of the solutions 
including both $K(x)$ is positive and negative at infinity with finite total curvature. McOwen \cite{Mc} and Aviles \cite{Av} investigated the situation where $K(x)\to 0$ in some order as $|x|\to\infty$. If we restrict the equation on bounded domian, there are also many well-known results including \cite{B-M}, \cite{LS}, \cite{L} and many others.  Via stereographic projection, we can pull back the metric $e^{2u}g_{\mr^2}$ to the sphere $\mathbb{S}^2$ and then it becomes a well-known Nirenberg problem. Many distinguished works are devoted to it such as \cite{CY}, \cite{XY}, \cite{Struwe05} and many others. 

As for higher order cases, Galimberti \cite{Ga17} and Ngô-Zhang \cite{NZ} studied the related problem about Q-curvature in dimension four. Recently, Struwe \cite{Struwe21} established a fourth order  theorem like Theorem \ref{thm: Struwe's theorem}. In current paper, we will generalize it to all higher  order cases. Consider the higher order equation
$$(-\Delta)^{n/2} u=K(x)e^{nu}\quad \mathrm{on}\;\mr^n$$
with even $n\geq 4$. Similar to the two-dimensional case, there are also famous classification theorems when $K$ is a positive constant, see \cite{Lin}, \cite{WX} and so on.

Actually, we will prove a more general theorem compared with Theorem \ref{thm:main theorem of order 2}. Now, we define a polynomially cone condition(denoted as $\mathbf{(PCC)}$ for brevity) as follows:
For a given continuous  function $K(x)$ with $x\in \mr^n$, there exist $R_1>0$, $\theta>0$, $\gamma\geq n$ and $c_1>0,c_2>0$ such that  $|K(x)|\leq c_2|x|^\gamma$ for $|x|\geq R_1$ and
$$|K(x)|\geq c_1|x|^\gamma\quad \mathrm{in}\; C_\theta\backslash B_{R_1}(0)$$
where $C_\theta$ is a cone with angle $\theta$ and vertex at the origin.

When  $p(x)$ is a non-constant polynomial in $\mr^2$ with $x\cdot\nabla p\leq 0$, it is not hard to see $\deg (p(x))\geq 2$ and then $p(x)$ satisfies $\mathbf{(PCC)}$ with help of Lemma \ref{lem:polynomial satisfy PCC}. Therefore, Theorem \ref{thm:main theorem of order 2} is a corollary of Theorem \ref{thm:general theorem}. Now, we state the general theorem as follows.
\begin{theorem}\label{thm:general theorem}
	Suppose $K(x)$ is a smooth function on $\mr^2$ satisfying $\mathbf{(PCC)}$ and $x\cdot\nabla K\leq 0$. Then
	there is no solution $u\in C^{\infty}(\mr^2)$ of the equation
	\begin{equation}\label{equ:general}
		-\Delta u=K(x)e^{2u}\quad \mathrm{on}\;\mr^2
	\end{equation}
	with $u\leq C$ and
	 such that the induced metric $h=e^{2u}g_{\mr^2}$ has  finite total curvature
	$$\int_{\mr^2}|K(x)|e^{2u}\ud x<\infty.$$
\end{theorem}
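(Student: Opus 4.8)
The plan is to play the finiteness of the total curvature against the sign condition $x\cdot\nabla K\le 0$ by means of the Pohozaev identity (Lemma~\ref{lem:Pohozaev indentity}). The quantity that links the two is the total mass
$$\beta:=\frac1{2\pi}\int_{\mr^2}K(x)e^{2u}\,\ud x,$$
which controls the growth of $u$ at infinity. The first step is the asymptotic analysis: representing $u$ as the Newtonian potential of $Ke^{2u}$ plus a harmonic function, using $u\le C$ to discard a nontrivial harmonic part, and using $\int_{\mr^2}|K|e^{2u}\,\ud x<\infty$ together with elliptic estimates on dyadic annuli to pin down the leading term — this is the circle of ideas of Chen--Li \cite{CL93} and Cheng--Lin \cite{CLin} — one expects to obtain
$$u(x)=-\beta\log|x|+O(1),\qquad \nabla u(x)=-\beta\,\frac{x}{|x|^2}+o\!\Big(\tfrac1{|x|}\Big)\quad\text{as }|x|\to\infty.$$
In particular $e^{2u(x)}$ is comparable to $|x|^{-2\beta}$ near infinity, so plugging the lower bound $|K(x)|\ge c_1|x|^\gamma$ on $C_\theta\setminus B_{R_1}$ furnished by $\mathbf{(PCC)}$ into $\int_{\mr^2}|K|e^{2u}\,\ud x<\infty$ forces $\int_{R_1}^\infty r^{\gamma-2\beta+1}\,\ud r<\infty$, i.e. $2\beta>\gamma+2$. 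Since $\gamma\ge n=2$, this yields the key lower bound
$$\beta>2$$
(in particular $\beta>0$, which is also what reconciles $u\to-\infty$ with $u\le C$).

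The second step applies the Pohozaev identity on balls. Multiplying $-\Delta u=Ke^{2u}$ by $x\cdot\nabla u$, using $e^{2u}(x\cdot\nabla u)=\tfrac12\,x\cdot\nabla(e^{2u})$ and integrating by parts, Lemma~\ref{lem:Pohozaev indentity} gives, in dimension two,
$$\int_{\partial B_R}\!\Big(\tfrac R2|\nabla u|^2-R\,u_r^2\Big)\,\ud S=\frac R2\int_{\partial B_R}\!Ke^{2u}\,\ud S-\frac12\int_{B_R}\!(x\cdot\nabla K)e^{2u}\,\ud x-\int_{B_R}\!Ke^{2u}\,\ud x,$$
where $u_r$ is the radial derivative. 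Since $\int_0^\infty\!\big(\int_{\partial B_r}|K|e^{2u}\,\ud S\big)\,\ud r=\int_{\mr^2}|K|e^{2u}\,\ud x<\infty$, there is a sequence $R_j\to\infty$ with $R_j\int_{\partial B_{R_j}}|K|e^{2u}\,\ud S\to0$, so the first term on the right vanishes along $R_j$; by the expansion above the left-hand side tends to $-\pi\beta^2$ and the last term to $-2\pi\beta$. Hence the middle term converges and
$$\int_{\mr^2}(x\cdot\nabla K)\,e^{2u}\,\ud x=2\pi\beta^2-4\pi\beta=2\pi\beta(\beta-2).$$
By the first step $\beta>2$, so the right-hand side is strictly positive; but $x\cdot\nabla K\le0$ forces the left-hand side to be $\le0$. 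This contradiction shows that no such $u$ exists; Theorem~\ref{thm:main theorem of order 2} is then the special case of a non-constant polynomial, which satisfies $\mathbf{(PCC)}$ by Lemma~\ref{lem:polynomial satisfy PCC}.

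The elementary part of the argument — the Pohozaev manipulation, the selection of $R_j$, and the final clash of signs $\beta(\beta-2)>0$ versus $\le0$ — is short. The real difficulty sits in the first step: establishing $u=-\beta\log|x|+O(1)$ with the matching gradient estimate $\nabla u(x)=-\beta x/|x|^2+o(|x|^{-1})$ when $K$ is merely smooth and $\mathbf{(PCC)}$ permits $|K|$ to grow like $|x|^\gamma$ and to change sign. One has to control the Newtonian potential of the sign-changing and possibly large density $Ke^{2u}$, show that the boundary term in the Pohozaev identity genuinely converges to $-\pi\beta^2$ (this is where an $L^2$-gradient estimate on $\partial B_{R_j}$ is needed, not merely a crude bound), and identify the leading coefficient with $\tfrac1{2\pi}\int_{\mr^2}Ke^{2u}\,\ud x$. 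Once this asymptotic package is in place, the exponent condition $\gamma\ge n$ built into $\mathbf{(PCC)}$ and the sign condition $x\cdot\nabla K\le0$ close the argument almost immediately.
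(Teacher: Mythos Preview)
Your overall architecture matches the paper's exactly: write $u$ as the logarithmic potential of $Ke^{2u}$ plus a harmonic function, kill the harmonic part via $u\le C$, extract a lower bound $\beta\ge2$ from $\mathbf{(PCC)}$, and collide this with the Pohozaev identity $\tfrac{1}{2\pi}\int(x\cdot\nabla K)e^{2u}=\beta(\beta-2)$ and the sign hypothesis. The differences sit precisely in the two places you yourself flag as ``the real difficulty'', and in both the paper takes a route that sidesteps the problem rather than solving it.

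For $\beta\ge2$ you invoke a \emph{two-sided} pointwise asymptotic $u(x)=-\beta\log|x|+O(1)$, so that $e^{2u}\asymp|x|^{-2\beta}$, and then integrate against the cone lower bound $|K|\ge c_1|x|^\gamma$. The lower half $u\ge-\beta\log|x|-C$ is the delicate direction and is not available a priori; indeed the paper never proves it. In Lemma~\ref{lem:alpha} the paper instead obtains only the \emph{averaged} asymptotic $|B_\theta|^{-1}\int_{B_\theta}w=(-\alpha+o(1))\log|x_0|$ on balls $B_\theta\subset C_\theta$, applies Jensen's inequality together with $|K|\ge c_1|x|^\gamma$ on the cone to get $\alpha\ge1+\gamma/2\ge2$, and only \emph{afterwards} deduces the pointwise upper bound $w\le-\alpha\log|x|+C$. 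The Jensen-on-cone-balls trick is what replaces the pointwise lower bound you would need.

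For Pohozaev you test $-\Delta u=Ke^{2u}$ against $x\cdot\nabla u$ and must then control $\int_{\partial B_R}\big(\tfrac{R}{2}|\nabla u|^2-Ru_r^2\big)$ via the gradient expansion $\nabla u=-\beta x/|x|^2+o(|x|^{-1})$. The paper avoids this entirely: once $u$ is identified (up to an additive constant) with a solution $w$ of the integral equation~\eqref{equ:integral equation}, Lemma~\ref{lem:Pohozaev indentity} derives the identity directly from the integral representation, following Xu~\cite{Xu05}. One writes $\langle x,\nabla w\rangle$ as a kernel integral~\eqref{equ:x,nabla w}, multiplies by $Ke^{2w}$, symmetrizes in $x$ and $y$, and passes to the limit using only $|K|e^{2w}\in L^1$ and the decay $|K|e^{2w}\le C|x|^{-2}$ that Lemma~\ref{lem:alpha} already supplies. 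No boundary gradient term ever appears.

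So your proposal is correct in spirit, and the differential Pohozaev route is certainly viable in principle, but it imports exactly the technical debt (two-sided asymptotics, $L^2$ gradient control on spheres, with $K$ sign-changing and polynomially growing) that the paper's integral-equation machinery --- Jensen for $\alpha\ge2$, Xu's symmetrization for Pohozaev --- was designed to eliminate.
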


For higher order cases, we can also get similar result with additional assumption on the behavior of $\Delta u$ near infinity like that in \cite{Struwe21}. Precisely, we state it as follows.
\begin{theorem}\label{thm:higher order}
		Suppose $K(x)$ is a smooth function on $\mr^n$ satisfying $\mathbf{(PCC)}$ and $x\cdot\nabla K\leq 0$  where  $n\geq 4$ is an even integer. Then
	there is no solution $u\in C^{\infty}(\mr^n)$ of the equation
	\begin{equation}\label{equ:general higher}
		(-\Delta)^{n/2} u=K(x)e^{nu}\quad \mathrm{on}\;\mr^n
	\end{equation}
	with $u\leq C$ and \begin{equation}\label{condtion laplican }
		\Delta u(x)\to 0 \quad\mathrm{as}\; |x|\to\infty
	\end{equation}
 such that the induced metric $h=e^{2u}g_{\mr^n}$ has  finite total curvature
	$$\int_{\mr^n}|K(x)|e^{nu}\ud x<\infty.$$
\end{theorem}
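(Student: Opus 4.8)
The plan is to combine the logarithmic representation of $u$, which pins down a precise asymptotic $u(x)=-\alpha\log|x|+O(1)$ with $\alpha=\gamma_n^{-1}\int_{\mr^n}Ke^{nu}\,\ud x$ (where $\gamma_n>0$ is normalised by $(-\Delta)^{n/2}\bigl(\gamma_n^{-1}\log\tfrac1{|\cdot|}\bigr)=\delta_0$), with a Pohozaev identity obtained by testing \eqref{equ:general higher} against $x\cdot\nabla u$ on balls $B_R$ and letting $R\to\infty$. The sign condition $x\cdot\nabla K\le 0$ will force $\alpha\le 2$ through the Pohozaev identity, whereas the lower bound in $\mathbf{(PCC)}$, integrated against $e^{nu}\sim|x|^{-n\alpha}$ over the cone, will force $\alpha>2$; that contradiction is the whole proof. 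Throughout, the threshold value $\alpha=2$ is the one realised by the standard bubble $\log\frac{2\lambda}{1+\lambda^2|x|^2}$.

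First I would set $v(x)=\gamma_n^{-1}\int_{\mr^n}\log\frac{|y|}{|x-y|}K(y)e^{nu(y)}\,\ud y$, which is well defined since $\int_{\mr^n}|K|e^{nu}<\infty$, so that $w:=u-v$ is entire polyharmonic, $(-\Delta)^{n/2}w=0$. Elliptic estimates together with $u\le C$ and finiteness of the total curvature give $|K(x)|e^{nu(x)}\to 0$ as $|x|\to\infty$; splitting the defining integral of $v$ over $\{|y|\le|x|/2\}$, $\{|x|/2\le|y|\le 2|x|\}$ and $\{|y|\ge 2|x|\}$ then yields $v(x)=-\alpha\log|x|+O(1)$ and, after differentiating the kernel, $|\nabla^k v(x)|\le C_k|x|^{-k}$ for $1\le k\le n$, with the error $\tilde v:=v+\alpha\log|x|$ and all its derivatives up to order $n$ decaying strictly faster than the corresponding power of $|x|^{-1}$ (Taylor expanding $\log\tfrac1{|x-y|}$ around $y=0$ gains an extra factor $|y|$). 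Moreover $\Delta v(x)=-\tfrac{n-2}{\gamma_n}\int_{\mr^n}|x-y|^{-2}Ke^{nu}\,\ud y\to 0$. Invoking the hypothesis $\Delta u\to 0$, the polyharmonic $w$ satisfies $\Delta w\to 0$; since $\Delta w$ is itself entire polyharmonic (of order $n/2-1$) and bounded, a Liouville theorem for polyharmonic functions forces $\Delta w\equiv 0$, so $w$ is harmonic, and because $w(x)\le C+\alpha\log|x|+O(1)$ a harmonic function on $\mr^n$ of at most logarithmic growth from above is constant (Harnack plus the interior gradient estimate). Hence $u=v+\mathrm{const}$, and the asymptotics above hold for $u$ itself.

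Next come the two bounds on $\alpha$. From $\mathbf{(PCC)}$ one has $c_1\int_{C_\theta\setminus B_{R_1}}|x|^\gamma e^{nu}\,\ud x\le\int_{\mr^n}|K|e^{nu}\,\ud x<\infty$; since $e^{nu}\ge c^{-1}|x|^{-n\alpha}$ for large $|x|$ and the cone carries a spherical cap of fixed solid angle at every radius, this forces $\int^\infty r^{\gamma-n\alpha+n-1}\,\ud r<\infty$, hence $n\alpha>\gamma+n$, i.e. $\alpha>1+\gamma/n\ge 2$ because $\gamma\ge n$ in $\mathbf{(PCC)}$. For the opposite bound I would use the Pohozaev identity (Lemma~\ref{lem:Pohozaev indentity}): because \eqref{equ:general higher} is conformally invariant in dimension $n$, the interior term vanishes and $\int_{B_R}(x\cdot\nabla u)(-\Delta)^{n/2}u\,\ud x=\mathcal B_R[u]$ is a finite sum of surface integrals over $\partial B_R$ of products of derivatives of $u$ of combined order $n$, while $\int_{B_R}(x\cdot\nabla u)Ke^{nu}\,\ud x=\tfrac Rn\int_{\partial B_R}Ke^{nu}\,\ud S-\int_{B_R}Ke^{nu}\,\ud x-\tfrac1n\int_{B_R}(x\cdot\nabla K)e^{nu}\,\ud x$. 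Letting $R\to\infty$: the bound $|K|e^{nu}\lesssim|x|^{\gamma-n\alpha}$ with $\gamma-n\alpha<-n$ kills $R\int_{\partial B_R}Ke^{nu}$; $\int_{B_R}Ke^{nu}\to\gamma_n\alpha$; and $\int_{B_R}(x\cdot\nabla K)e^{nu}$ is nonincreasing in $R$ with limit in $[-\infty,0]$. On the left the estimates $|\nabla^k u|=O(|x|^{-k})$ make $\mathcal B_R[u]$ convergent, and since the derivatives of $\tilde v$ decay faster than the relevant powers, only the model $-\alpha\log|x|$ contributes in the limit; as $\mathcal B_R[-\alpha\log|x|]$ is independent of $R$ (the model is polyharmonic off the origin, so $\mathcal B_R-\mathcal B_r$ vanishes on any annulus), $\mathcal B_R[u]\to f_n(\alpha)$ with $f_n$ of degree $\le 2$. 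Invariance of $\mathcal B_R$ under $u\mapsto -u$ and $\mathcal B_R[0]=0$ make $f_n$ even with no constant term, $f_n(\alpha)=a_n\alpha^2$; testing the identity on the standard bubble ($K\equiv(n-1)!$, $x\cdot\nabla K\equiv 0$, $\alpha=2$) gives $4a_n=-2\gamma_n$. Hence in the limit
\[
-\tfrac{\gamma_n}{2}\alpha^2=-\gamma_n\alpha-\tfrac1n\int_{\mr^n}(x\cdot\nabla K)e^{nu}\,\ud x,\qquad\text{i.e.}\qquad \tfrac{\gamma_n}{2}\,\alpha(\alpha-2)=\tfrac1n\int_{\mr^n}(x\cdot\nabla K)e^{nu}\,\ud x\le 0,
\]
so $0\le\alpha\le 2$, contradicting $\alpha>2$ above; this proves the theorem.

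I expect the main obstacle to be the asymptotic analysis of the representation formula: one needs not merely $u\sim-\alpha\log|x|$ but the full family of derivative estimates $\nabla^k u=O(|x|^{-k})$ for $1\le k\le n$ together with the strictly faster decay of $\tilde v=v+\alpha\log|x|$, since this is exactly what makes the boundary terms $\mathcal B_R[u]$ converge to the clean value $f_n(\alpha)=-\tfrac{\gamma_n}{2}\alpha^2$ rather than to something uncontrolled. The other delicate point is the polyharmonic rigidity step turning $w$ into a constant, which is precisely where the extra hypothesis $\Delta u\to 0$ (and the decay $\Delta v\to 0$) is genuinely used.
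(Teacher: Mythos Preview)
Your overall strategy matches the paper's: write $u$ as a logarithmic potential plus a polyharmonic remainder, use the hypothesis $\Delta u\to 0$ to force the remainder to be constant, then derive a contradiction between a lower bound on $\alpha$ coming from $\mathbf{(PCC)}$ and an upper bound coming from a Pohozaev identity together with $x\cdot\nabla K\le 0$. There is, however, a genuine circularity in your asymptotic analysis, and your Pohozaev route differs from the paper's.

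\textbf{The circularity.} You assert that $|K(x)|e^{nu(x)}\to 0$ pointwise, that $v(x)=-\alpha\log|x|+O(1)$ with a \emph{two-sided} $O(1)$, and that $\Delta v(x)\to 0$ pointwise. None of these follows from $u\le C$ and $|K|e^{nu}\in L^1$ alone: the local contribution $\int_{B_1(x)}\log\frac{1}{|x-y|}K^-(y)e^{nu(y)}\,\ud y$ (and its analogue with $|x-y|^{-2}$ in $\Delta v$) is controlled only once you know $|K|e^{nu}$ is locally bounded near infinity, which in turn requires $n\alpha\ge\gamma$. But you then use the two-sided $O(1)$ bound (via $e^{nu}\ge c|x|^{-n\alpha}$) to prove $\alpha>1+\gamma/n$ in the first place. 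The paper breaks this loop in two ways. First, it proves only the \emph{averaged} convergence $\int_{B_1(x_0)}|\Delta\tilde u|\,\ud x\to 0$, which needs nothing beyond $|K|e^{nu}\in L^1$; combined with $\Delta u\to 0$ and Pizzetti's formula (Lemma~\ref{lem:Pizzetti's formula}) for the polyharmonic difference $v=u-\tilde u$, this already forces $v$ to be a polynomial of degree at most $n-2$, hence $\Delta v$ a polynomial of degree at most $n-4$ with vanishing unit-ball averages at infinity, hence $\Delta v\equiv 0$, hence $v$ constant. Second, once $\tilde u$ solves the integral equation, the bound $\alpha\ge 1+\gamma/n$ is obtained via \emph{Jensen's inequality} on balls inside the cone (Lemma~\ref{lem:alpha}): one averages $w$ over such a ball to get $(-\alpha+o(1))\log|x_0|$, exponentiates, and compares with $\int_{B_\theta}|K|e^{nw}$; no pointwise lower bound on $u$ is needed. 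Only \emph{after} $\alpha\ge 2$ is in hand does the paper upgrade to the sharp one-sided estimate $w\le-\alpha\log|x|+C$.

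\textbf{The Pohozaev route.} You test the PDE against $x\cdot\nabla u$ and track boundary terms on $\partial B_R$; this is a legitimate alternative, but it hinges on the full family of derivative estimates $\nabla^k u=O(|x|^{-k})$ for $1\le k\le n$ and the strictly faster decay of $\nabla^k\tilde v$, which you have not established and which again rest on the $\alpha\ge 2$ information above. The paper instead proves the Pohozaev identity directly for the integral equation (Lemma~\ref{lem:Pohozaev indentity}), following Xu: one multiplies the representation formula for $\langle x,\nabla w\rangle$ by $K(x)e^{nw(x)}$, symmetrises in $x$ and $y$ via $x=\tfrac12\bigl((x+y)+(x-y)\bigr)$, and passes to the limit using only $|K|e^{nw}\in L^1$ and the upper bound $w\le-\alpha\log|x|+C$. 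No boundary analysis of higher derivatives is required, which is what makes the integral-equation approach robust here.
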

\begin{remark}
	In \cite{Struwe21}, Struwe considered  $K(x)=1+A(x,x,x,x)$ with $A$ being a negative definite and symmetric 4-linear map on $\mr^4$. Based on the definition of $A$ in Struwe's work,  for any $x\in \mr^4$, we have
	$$c_2|x|^4\geq |A(x,x,x,x)|\geq c_1|x|^4$$
	for some $c_1>0, c_2>0$. Using this property, one has
	$$|1+A(x,x,x,x)|\geq c_1|x|^4-1\geq \frac{c_1}{2}|x|^4$$
	for some $R_1>0$ and any  $|x|\geq R_1>0$ which shows that   $1+A(x,x,x,x)$ satisfies $\mathbf{(PCC)}$.  Therefore Theorem 1 in \cite{Struwe21} is a special case of Theorem \ref{thm:higher order}.
\end{remark}
Since this paper is short, we need not mention the structure of the paper as it goes naturally.
\section{Crucial Lemmas}
Firstly, we consider the integral equation
\begin{equation}\label{equ:integral equation}
	w(x)=\frac{2}{\omega_n}\int_{\mr^n}\left(\log\frac{|y|}{|x-y|}\right)K(y)e^{nw(y)}\ud y
\end{equation}
with $K(x)e^{nw(x)}\in L^1(\mr^n)$  where $\omega_n$ is the volume of unit sphere in $\mr^{n+1}$ and we set 
$$\alpha=\frac{2}{\omega_n}\int_{\mr^n}K(y)e^{nw(y)}\ud y$$
for brevity.

 Inspired by the proof of Theorem  4.5 in \cite{Struwe20}, we give the estimate of $\alpha$ as follows.
 \begin{lemma}\label{lem:alpha}
 		Given a function $K(x)\in C^1(\mr^n)$ where $n\geq 2$ is an even integer satisfying $\mathbf{(PCC)}$ and $x\cdot\nabla K\leq 0$.  Suppose that  $w(x)\in L^\infty_{loc}(\mr^n)$ bounded from above is a  solution of \eqref{equ:integral equation}.  Then we have
 	$$\alpha\geq 1+\frac{\gamma}{n}\geq 2$$
 	and 
 	$$\lim_{|x|\to\infty}\frac{w(x)}{\log|x|}=-\alpha.$$
 	Moreover,
 	$$w(x)\leq -\alpha\log |x|+C$$
 	for $|x|>>1$.
 \end{lemma}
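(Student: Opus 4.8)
The plan is to analyze the integral equation \eqref{equ:integral equation} in the style of Chen--Li and Struwe. First I would establish the asymptotic behavior of $w$ directly from the integral representation. Write $\mu = \frac{2}{\omega_n}\int_{\mr^n} K(y)e^{nw(y)}\,\ud y$ (this is $\alpha$), and split the kernel integral as
$$
w(x) = \frac{2}{\omega_n}\int_{|y|\le |x|/2} + \frac{2}{\omega_n}\int_{|y|\ge 2|x|} + \frac{2}{\omega_n}\int_{|x|/2 < |y| < 2|x|},
$$
and on each piece estimate $\log\frac{|y|}{|x-y|}$. On the first region $\log\frac{|y|}{|x-y|} = -\log|x| + o(\log|x|)$ uniformly; on the far region and the intermediate annulus one uses the $L^1$ bound on $Ke^{nw}$ together with a standard estimate showing $\int_{|x|/2<|y|<2|x|}|\log|x-y||\,|K(y)|e^{nw(y)}\,\ud y = o(\log|x|)$. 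This is precisely the argument that yields $\lim_{|x|\to\infty} w(x)/\log|x| = -\alpha$ and the one-sided bound $w(x)\le -\alpha\log|x| + C$ for $|x|$ large. The upper bound requires knowing $\alpha \ge 0$ at least, which will come out of the next step, so I would organize the proof so that the lower bound on $\alpha$ is obtained first.

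The heart of the matter is the inequality $\alpha \ge 1 + \gamma/n$. Here I would use the $\mathbf{(PCC)}$ hypothesis. Since $|K(x)| \ge c_1|x|^\gamma$ on the cone $C_\theta \setminus B_{R_1}$ and $Ke^{nw} \in L^1$, the growth $|x|^\gamma$ forces $e^{nw}$ to decay fast on that cone: roughly $e^{nw(x)} \lesssim |x|^{-\gamma - n - \epsilon}$ in an averaged sense on the cone, so in particular $w$ cannot decay slower than $-(1+\gamma/n)\log|x|$ \emph{along the cone}. Combined with the already-established fact that $w(x)/\log|x| \to -\alpha$ \emph{everywhere}, this gives $-\alpha \le -(1+\gamma/n)$, i.e. $\alpha \ge 1 + \gamma/n \ge 2$ since $\gamma \ge n$. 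To make the averaged decay precise I would integrate: from the $L^1$ finiteness of $\int_{C_\theta\setminus B_{R_1}}|K|e^{nw} \ge c_1\int_{C_\theta\setminus B_{R_1}}|x|^\gamma e^{nw}$, in polar-type coordinates adapted to the cone one gets $\int_1^\infty r^{\gamma + n - 1}\big(\fint_{\text{cone slice}} e^{nw}\big)\,\ud r < \infty$, and since $w(rx)/\log r \to -\alpha$ uniformly on the unit cone slice (a consequence of the representation formula, which gives the limit uniformly in direction once one is careful), the integrand behaves like $r^{\gamma + n - 1 - n\alpha}$ near infinity, and integrability forces $\gamma + n - n\alpha < 0$, i.e. $\alpha > 1 + \gamma/n$; a limiting/borderline argument or working with $\alpha \ge$ rather than $>$ handles the endpoint. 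The condition $x\cdot\nabla K \le 0$ does not obviously enter here, so I suspect it is used instead in a Pohozaev-type identity elsewhere (Lemma \ref{lem:Pohozaev indentity}) or to guarantee $\alpha$ is well-defined with the right sign; within this lemma its role may be to ensure $\int K e^{nw}$ is not dominated by a huge negative contribution near the origin.

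The main obstacle I anticipate is making the "uniform in direction" asymptotics $w(x) = -\alpha\log|x| + o(\log|x|)$ genuinely uniform as $|x|\to\infty$, because the error terms in the kernel split depend on the distribution of the mass of $K e^{nw}$, and a priori that mass could concentrate near spheres of radius comparable to $|x|$; controlling the intermediate annulus $\{|x|/2 < |y| < 2|x|\}$ uniformly is the delicate estimate. The standard fix is a dichotomy/covering argument: the measure $|K|e^{nw}\,\ud y$ being finite means its mass on annuli $\{2^{k-1} \le |y| \le 2^{k+1}\}$ tends to zero, so by Hölder or a direct logarithmic-potential estimate the annular contribution is $o(\log|x|)$ with a rate independent of direction. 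Once that uniform asymptotic is in hand, the cone integrability argument closes the lower bound on $\alpha$, and then feeding $\alpha \ge 2 > 0$ back into the representation upgrades the $o(\log|x|)$ error to the clean bound $w(x) \le -\alpha\log|x| + C$ by a more careful estimate of the first region (where $-\log|x| \le \log\frac{|y|}{|x-y|}$ is replaced by the sharper $\log\frac{|y|}{|x-y|} \le \log\frac{|y|}{|x|-|y|}$ and one splits at $|y| \le |x|/2$). I would present the proof in the order: (i) kernel split and $o(\log|x|)$ asymptotics uniform in direction; (ii) cone integrability $\Rightarrow \alpha \ge 1+\gamma/n$; (iii) sharpen to $w \le -\alpha\log|x| + C$.
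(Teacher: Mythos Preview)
Your plan has a genuine circularity that the paper's proof is specifically designed to avoid. You propose to first establish the pointwise asymptotic $w(x)/\log|x|\to-\alpha$ uniformly in direction, and then feed that into the cone integrability argument to extract $\alpha\ge 1+\gamma/n$. But the pointwise limit is \emph{not available} at that stage. On the region near $y=x$ (whether you phrase it as your intermediate annulus $|x|/2<|y|<2|x|$ or as the unit ball $B_1(x)$), the kernel $\log\frac{1}{|x-y|}$ is singular, and all you know a priori is $w\le C$ and $|K(y)|\le c_2|y|^\gamma$, so $|K|e^{nw}\lesssim |x|^\gamma$ on $B_1(x)$. Thus $\int_{B_1(x)}\bigl|\log|x-y|\bigr|\,|K|e^{nw}\,\ud y$ can be of order $|x|^\gamma$ with $\gamma\ge n\ge 2$, which is certainly not $o(\log|x|)$. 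Your dyadic ``mass on annuli $\to 0$'' fix addresses only the $L^1$ tail, not this local singular blow-up. Consequently your step (i) fails, and since your derivation of $\alpha\ge 1+\gamma/n$ in step (ii) explicitly invokes the uniform pointwise limit on cone slices, the whole chain stalls.

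The paper breaks this loop in two ways you are missing. First, instead of a pointwise asymptotic it proves an \emph{averaged} one: by Fubini, $\fint_{B_\theta}\int_{B_1(x)}\bigl|\log|x-y|\bigr|\,|K|e^{nw}\,\ud y\,\ud x$ is bounded by $C|x_0|^{n-1}$ for a ball $B_\theta=B_{r_\theta|x_0|}(x_0)$ sitting in the PCC cone, which is enough to conclude $\fint_{B_\theta}w=(-\alpha+o(1))\log|x_0|$. Then \emph{Jensen's inequality} converts this into $\fint_{B_\theta}e^{nw}\ge |x_0|^{-n\alpha+o(1)}$, and combining with $|K|\ge c_1|x|^\gamma$ on $B_\theta$ and $\int|K|e^{nw}<\infty$ forces $n+\gamma-n\alpha\le 0$. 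This replaces your polar integration argument and requires no pointwise control. Second, and contrary to your guess, the hypothesis $x\cdot\nabla K\le 0$ \emph{is} used inside this lemma: it gives $K(x)\le K(0)$, hence $K^+\le C$, which together with $w\le C$ bounds $\int_{B_1(x)}\log\frac{1}{|x-y|}K^+e^{nw}\,\ud y\le C$. This is what yields the one-sided estimate $w(x)\le(-\alpha+o(1))\log|x|$, which in turn (now that $\alpha\ge 1+\gamma/n$) gives $|K|e^{nw}\le C$ pointwise at infinity; only then does the pointwise limit $w/\log|x|\to-\alpha$ follow, and the sharpening $w\le-\alpha\log|x|+C$ comes last. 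So the correct order is: averaged asymptotics $\Rightarrow$ Jensen $\Rightarrow$ $\alpha\ge 1+\gamma/n$ $\Rightarrow$ (via $K^+\le C$) one-sided bound $\Rightarrow$ pointwise limit $\Rightarrow$ sharp upper bound, essentially the reverse of your (i)--(ii)--(iii).
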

 \begin{proof}
 	Choose $|x|\geq e^4$ such  that $|x|\geq 2\log|x|$. We split $\mr^n$ into three pieces
 	$$A_1=B_{1}(x), \quad A_2=B_{\log|x|}(0),\quad A_3=\mr^n\backslash (A_1\cup A_2).$$
 	For $y\in A_2$ and $|y|\geq2$ , we have $|\log\frac{|x|\cdot|y|}{|x-y|}|\leq \log(2\log|x|)$.
 	Respectively,  for $|y|\leq 2$, $ |\log\frac{|x|\cdot|y|}{|x-y|}|\leq |\log|y||+C$. 	Here and thereafter, we denote by $C$ a constant which may be different from line to line.
 	Thus 
 	\begin{equation}\label{A_2}
 		|\int_{A_2}\log\frac{|y|}{|x-y|}Ke^{nw}\ud y+\log|x|\int_{A_2}Ke^{nw}\ud y|\leq C\log\log|x|+C=o(1)\log|x|.
 	\end{equation}
 	as $|x|\to \infty$.
 	For $y\in A_3$, it is not hard to check 
 	$$\frac{1}{|x|+1}\leq\frac{|y|}{|x-y|}\leq |x|+1.$$ With help  of this estimate, we control the integral over $A_3$ as
 	\begin{equation}\label{A_3}
 		|\int_{A_3}\log\frac{|y|}{|x-y|}Ke^{nw}\ud y|\leq \log(|x|+1)\int_{A_3}|K|e^{nw(y)}\ud y.
 	\end{equation}
 	For $y\in B_1(x)$, one has
 	$1\leq |y|\leq |x|+1$ and then
 	$$|\int_{A_1}\log|y|Ke^{nw}\ud y|\leq \log(|x|+1)\int_{A_1}|K|e^{nw}\ud y.$$
 	It is not hard to see  $\int_{A_3\cup A_1}|K|e^{nw(y)}\ud y\to 0$ as $|x|\to \infty$.  Thus we have
 	\begin{equation}\label{u =-aplha log x}
 	\frac{2}{\omega_n}	\int_{\mr^n}\log\frac{|y|}{|x-y|}K(y)e^{nw(y)}\ud y=(-\alpha+o(1))\log|x|+\frac{2}{\omega_n}\int_{A_1}\log\frac{1}{|x-y|}K(y)e^{nw(y)}\ud y.
 	\end{equation}
 	Due to $Ke^{nw}\in L^1(\mr^n)$ and Fubini's theorem, one has  
 	$$\int_{B_1(x_0)}|\int_{A_1}\log\frac{1}{|x-y|}K(y)e^{nw(y)}\ud y|\ud x\leq C.$$
 	Thus
 	\begin{equation}\label{|u|}
 		\int_{B_1(x_0)}|\int_{\mr^n}\log\frac{|y|}{|x-y|}Ke^{nw}\ud y|\ud x\leq C\log(|x_0|+2)
 	\end{equation}
 	which will be used in our proofs of main theorems later.

 	Since $K$ satisfies $\mathbf{(PCC)}$, we could find $0<r_\theta<1/2$ fixed such that $B_\theta:=B_{r_\theta|x_0|}(x_0)$ lying in the cone where$|K(x)|\geq c_1|x|^\gamma$  as $|x_0|\to \infty$ suitably. 
 	With help of Fubini's theorem, direct computation yields that
 	\begin{align*}
 		&|\int_{B_\theta}\int_{|x-y|\leq 1}\log\frac{1}{|x-y|}Ke^{nw}\ud y\ud x|\\
 		\leq &\int_{B_\theta}\int_{|x-y|\leq 1}\frac{1}{|x-y|}|K|e^{nw}\ud y\ud x\\
 		\leq &\int_{B_{r_\theta|x_0|+1}(x_0)}|K|e^{nw}\int_{B_\theta}\frac{1}{|x-y|}\ud x\ud y\\
 		\leq &C|x_0|^{n-1}.
 	\end{align*}
 	Meanwhile, notice that 
 	$1-r_\theta\leq\frac{|x|}{|x_0|}\leq 1+r_\theta$ for $x\in B_{\theta}$.
 	Thus
 	$$\frac{1}{|B_\theta|}\int_{B_\theta}w(x)\ud x=(-\alpha +o(1))\log|x_0|$$
 	where we use the fact $w(x)=\frac{2}{\omega_n}\int_{\mr^n}\log\frac{|y|}{|x-y|}Ke^{nw}\ud y$.
 	With help of Jensen's inequality, we have
 	$$e^{\frac{1}{|B_\theta|}\int_{B_\theta}nw(x)\ud x}\leq\frac{1}{|B_\theta|} \int_{B_\theta}e^{nw}\ud x.$$
 	Recall $|K|\geq c_1 |x|^\gamma$ in $B_\theta$ and then one has
 	$$|x_0|^{n+\gamma-n\alpha +o(1)}\leq C \int_{B_\theta}|K|e^{nu}\ud x.$$
 	Notice that the right side of above inequality tends to zero as $|x_0|\to\infty$ based on  $Ke^{nu}\in L^1(\mr^n)$.
 	Thus we have
 	\begin{equation}\label{alpha geq 1+gamma/n}
 		\alpha\geq 1+\frac{\gamma}{n} \geq 2.
 	\end{equation}
 	Since $x\cdot\nabla K\leq 0$, it is easy to get $K\leq C$.
 	Combing with $w\leq C$, we  get the bound
 	\begin{equation}\label{K^+upper bound}
 		0\leq \int_{A_1}\log\frac{1}{|x-y|}K^+e^{nw}\ud y\leq C.
 	\end{equation}
 	Then we rewrite \eqref{u =-aplha log x} as
 	\begin{equation}\label{u=-alpha log +u_2}
 		w(x)=(-\alpha +o(1))\log|x|-\frac{2}{\omega_n}\int_{A_1}\log\frac{1}{|x-y|}K^-e^{nw}\ud y.
 	\end{equation}
 	Immediately, $w(x)\leq (-\alpha+o(1))\log|x|$. Since $\alpha \geq 1+\frac{\gamma}{n}$ and $|K|\leq C|x|^{\gamma}$ for sufficiently large $|x|$, we have
 	$$|K|e^{nw}\leq C$$
 	and then
 	$$|\frac{2}{\omega_n}\int_{A_1}\log\frac{1}{|x-y|}K^-e^{nw}\ud y|\leq C.$$
 	for sufficiently large $|x|$.
 	Finally, we have
 	\begin{equation}\label{u=-alpha }
 		\lim_{|x|\to\infty}\frac{w(x)}{\log|x|}=-\alpha.
 	\end{equation}
 	
 	Now we are going to give a more precise upper bound of $w$ as following
 	$$w(x)\leq -\alpha\log |x|+C.$$
 	Firstly, direct computation yields that
 	\begin{align*}
 		&w(x)+\alpha \log |x|\\
 		=&\frac{2}{\omega_n}\int_{\mr^n}\log\frac{|x|\cdot(|y|+1)}{|x-y|}K^+e^{nw}\ud y\\
 		&+\frac{2}{\omega_n}\int_{\mr^n}\log\frac{|x|\cdot(|y|+1)}{|x-y|}(-K^-)e^{nw}\ud y\\
 		&+\frac{2}{\omega_n}\int_{\mr^n}\log\frac{|y|}{|y|+1}Ke^{nw}\ud y\\
 		=:&I_1+I_2+I_3
 	\end{align*}	
 	
 	For $|x|\geq 1$, it is easy to check 
 	$$\frac{|x|\cdot(|y|+1)}{|x-y|}\geq 1$$ and then
 	$$I_2\leq 0.$$
 	Based on our assumptions, one has $I_3=C_0$ for some constant. 
 	
 	As for the term $I_1$, we need  use again the fact $K^+\leq C$. By using 	\eqref{u=-alpha } and \eqref{alpha geq 1+gamma/n}, 
 	we have
 	\begin{equation}\label{e^nu leq |y|^-3n/2}
 		e^{nw(y)}\leq C|y|^{-\frac{3n}{2}}
 	\end{equation}
 	for $|y|\geq1$.
 	We split $I_1$ as 
 	\begin{align*}
 		I_1=&\int_{|x-y|\leq \frac{|x|}{2}}\log\frac{|x|\cdot(|y|+1)}{|x-y|}K^+e^{nw}\ud y\\
 		&+\int_{|x-y|\geq \frac{|x|}{2}}\log\frac{|x|\cdot(|y|+1)}{|x-y|}K^+e^{nw}\ud y\\
 		=&:II_1+II_2.
 	\end{align*}
 	We  deal with these terms one by one.
 	Notice  that $|x|/2\leq|y|\leq 3|x|/2$ when $|x-y|\leq |x|/2$. Then
 	\begin{align*}
 		II_1=&\int_{|x-y|\leq \frac{|x|}{2}}\log|x|\cdot(|y|+1)K^+e^{nw}\ud y+\int_{|x-y|\leq \frac{|x|}{2}}\log\frac{1}{|x-y|}K^+e^{nw}\ud y\\
 		\leq& C \log (|x|(\frac{3|x|}{2}+1))|x|^{-3n/2}\int_{|x-y|\leq |x|/2}\ud y+C|x|^{-3n/2}\int_{|x-y|\leq |x|/2}\frac{1}{|x-y|}\ud y\\
 		\leq &C\log(3|x|^2/2+|x|)|x|^{-n/2}+C|x|^{-n/2+1}\leq C
 	\end{align*}
 	for $|x|>>1$. As for the term $II_2$, we make use  of $|x|/|x-y|\leq 2$ and  \eqref{e^nu leq |y|^-3n/2} to get
 	$$II_2\leq \int_{|x-y|\geq \frac{|x|}{2}}\log(2|y|+2)K^+e^{nw}\ud y\leq C.$$
 	Combining above estimates, we finally get the desired results
 	$$w(x)+\alpha\log |x|\leq C$$
 	for $|x|>>1.$
 \end{proof}

The following lemma plays an important role in the proof of our main theorems. The following generalized Pohozaev identity is also got in \cite{CLin} and \cite{CL93} with additional assumptions but the proof in \cite{Xu05} is very different from others. Inspired by \cite{Xu05}, we give the following Pohozaev indentity. Moreover, with help of the ideas from \cite{Li04} and \cite{CLO} related to intergral equations and elliptic theory, we generalize Theorem 1.1 in \cite{Xu05} with lower regularity assumption of the solution.

	\begin{lemma}\label{lem:Pohozaev indentity}
	Given a function $K(x)\in C^1(\mr^n)$ where $n\geq 2$ is an even integer satisfying $\mathbf{(PCC)}$ and $x\cdot\nabla K\leq 0$.  Suppose $w(x)\in L^\infty_{loc}(\mr^n)$ bounded from above is a  solution of \eqref{equ:integral equation}.
	Then
	\begin{equation}\label{equ:Pohozaev identity}
		\frac{4}{n\omega_n}\int_{\mr^n}\langle x,\nabla K(x)\rangle e^{nw(x)}\ud x=\alpha(\alpha-2).
	\end{equation}
\end{lemma}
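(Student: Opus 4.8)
The plan is to run the classical Pohozaev (scaling) argument directly on the integral equation \eqref{equ:integral equation}, in the spirit of \cite{Xu05}, using the quantitative decay from Lemma~\ref{lem:alpha} to justify every step for a merely locally bounded $w$. First I would upgrade the regularity: since $K\in C^1$ and $w\in L^\infty_{loc}$, the density $f:=Ke^{nw}$ lies in $L^\infty_{loc}(\mr^n)\cap L^1(\mr^n)$, and differentiating \eqref{equ:integral equation} under the integral sign (the kernel $\nabla_x\log\frac{1}{|x-y|}=\frac{y-x}{|x-y|^{2}}$ is locally integrable on $\mr^n$, while $f\in L^1$ handles infinity) gives, after a standard difference-quotient and dominated-convergence check, that $w\in C^1(\mr^n)$ with
$$\nabla w(x)=\frac{2}{\omega_n}\int_{\mr^n}\frac{y-x}{|x-y|^{2}}K(y)e^{nw(y)}\,\ud y,\qquad \langle x,\nabla w(x)\rangle=-\frac{2}{\omega_n}\int_{\mr^n}\frac{\langle x,x-y\rangle}{|x-y|^{2}}K(y)e^{nw(y)}\,\ud y.$$
Splitting the last integral over $\{|y|<|x|/2\}$, $\{|x|/2\le|y|\le2|x|\}$ and $\{|y|>2|x|\}$, and using $|K|e^{nw}\le C|y|^{-n}$ for large $|y|$ (which follows from $w\le-\alpha\log|y|+C$ with $\alpha\ge1+\gamma/n$ in Lemma~\ref{lem:alpha} together with $|K|\le c_2|y|^\gamma$) as well as $Ke^{nw}\in L^1$, one obtains $|\nabla w(x)|=O(|x|^{-1})$ as $|x|\to\infty$; in particular $\langle x,\nabla w\rangle$ is bounded and $Ke^{nw}\langle x,\nabla w\rangle\in L^1(\mr^n)$. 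The same splitting shows $\iint\frac{|\langle x,x-y\rangle|}{|x-y|^{2}}|f(x)|\,|f(y)|\,\ud x\,\ud y\le C\int|f(x)|\big(\int\frac{|f(y)|}{|x-y|}\,\ud y\big)\ud x<\infty$, so Fubini's theorem applies to the double integral used below.

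Next I would integrate the pointwise identity
$$e^{nw}\langle x,\nabla K\rangle=\mathrm{div}\!\big(xKe^{nw}\big)-nKe^{nw}-nKe^{nw}\langle x,\nabla w\rangle$$
over $B_R$. By the coarea formula and $Ke^{nw}\in L^1$ there is a sequence $R_j\to\infty$ with $R_j\int_{\partial B_{R_j}}|K|e^{nw}\to0$, so $\int_{B_{R_j}}\mathrm{div}(xKe^{nw})=R_j\int_{\partial B_{R_j}}Ke^{nw}\to0$ by the divergence theorem. Because $\langle x,\nabla K\rangle\le0$, the quantities $\int_{B_{R_j}}e^{nw}\langle x,\nabla K\rangle$ are monotone in $j$, while the two remaining terms converge by dominated convergence; hence all limits exist (so, in passing, $\langle x,\nabla K\rangle e^{nw}\in L^1$) and
$$\int_{\mr^n}e^{nw}\langle x,\nabla K\rangle=-n\int_{\mr^n}Ke^{nw}-n\int_{\mr^n}Ke^{nw}\langle x,\nabla w\rangle=-\frac{n\omega_n}{2}\,\alpha-n\int_{\mr^n}Ke^{nw}\langle x,\nabla w\rangle.$$
Substituting the formula for $\langle x,\nabla w\rangle$ and applying Fubini,
$$-n\int_{\mr^n}Ke^{nw}\langle x,\nabla w\rangle=\frac{2n}{\omega_n}\iint_{\mr^n\times\mr^n}\frac{\langle x,x-y\rangle}{|x-y|^{2}}K(x)e^{nw(x)}K(y)e^{nw(y)}\,\ud x\,\ud y,$$
and averaging with the $x\leftrightarrow y$ interchange, together with $\langle x,x-y\rangle+\langle y,y-x\rangle=|x-y|^{2}$, collapses the kernel to $\tfrac12$, which yields $\frac{n}{\omega_n}\big(\int_{\mr^n}Ke^{nw}\big)^{2}=\frac{n}{\omega_n}\cdot\frac{\omega_n^{2}}{4}\alpha^{2}=\frac{n\omega_n}{4}\alpha^{2}$. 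Combining the two displays, $\int_{\mr^n}e^{nw}\langle x,\nabla K\rangle=\frac{n\omega_n}{4}(\alpha^{2}-2\alpha)$, i.e.\ $\frac{4}{n\omega_n}\int_{\mr^n}\langle x,\nabla K\rangle e^{nw}=\alpha(\alpha-2)$, which is exactly \eqref{equ:Pohozaev identity}.

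The genuinely delicate point is the first step: promoting a solution that is only locally bounded to a $C^1$ function carrying the explicit gradient representation, and then extracting from Lemma~\ref{lem:alpha} the \emph{sharp} decay $|\nabla w(x)|=O(|x|^{-1})$. This estimate is what makes $\langle x,\nabla w\rangle$ bounded (rather than merely $O(\log|x|)$), and it is simultaneously the input needed both to differentiate under the integral and to secure the absolute convergence that licenses the Fubini symmetrization; the borderline growth $|K|e^{nw}\lesssim|x|^{-n}$ forces the boundary fluxes and the term $n\int Ke^{nw}\langle x,\nabla w\rangle$ to be treated along the $L^1$-selected radii $R_j$ rather than for all $R$. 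Once the $C^1$ theory and the decay are in place, the remainder reduces, as above, to the elementary identity $\langle x,x-y\rangle+\langle y,y-x\rangle=|x-y|^{2}$ and to the bookkeeping of constants.
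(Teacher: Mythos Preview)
Your proof is correct and follows essentially the same approach as the paper: both differentiate the integral equation to obtain $\langle x,\nabla w\rangle$, symmetrize the resulting double integral in $x\leftrightarrow y$ (the paper via $x=\tfrac12(x+y)+\tfrac12(x-y)$, you via $\langle x,x-y\rangle+\langle y,y-x\rangle=|x-y|^2$, which is the same identity), and handle the divergence term along a sequence $R_j\to\infty$ using $x\cdot\nabla K\le0$ for monotone convergence. The only organizational difference is that you first isolate the global bound $|\nabla w|=O(|x|^{-1})$ to justify Fubini on all of $\mr^n$, whereas the paper truncates to $B_R$ and shows the cross terms $I_1,I_2,I_3$ vanish as $R\to\infty$; both rest on the same decay $|K|e^{nw}\lesssim|x|^{-n}$ from Lemma~\ref{lem:alpha}.
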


\begin{proof}
		Firstly, we will show that $w\in C^1(\mr^n)$. Here, we just assume that  $w\in L^\infty_{loc}(\mr^n)$. Actually,   with help of   Lemma \ref{appen.1}, we can deduce that $w\in C^1(\mr^n)$ when $Ke^{nw}\in L^1(\mr^n)$, $K(x)\in C^1(\mr^n)$ and $w\in L^\infty_{loc}(\mr^n)$. 
	By direct computation, one has
	\begin{equation}\label{equ:x,nabla w}
		\langle x,\nabla w\rangle=-\frac{2}{\omega_n}\int_{\mr^n}\frac{\langle x,x-y\rangle}{|x-y|^2}K(y)e^{nw(y)}\ud y
	\end{equation}
	Multiplying by $K(x)e^{nw(x)}$ and integrating over the ball $B_R(0)$ for any $R>0$, we have
	\begin{equation}\label{equ:ingegrate x,nabla w}
		\int_{B_R(0)}K(x)e^{nw(x)}\left[-\frac{2}{\omega_n}\int_{\mr^n}\frac{\langle x,x-y\rangle}{|x-y|^2}K(y)e^{nw(y)}\ud y\right]\ud x=\int_{B_R(0)}K(x)e^{nw(x)}\langle x,\nabla w(x)\rangle\ud x.
	\end{equation}
	With $x=\frac{1}{2}\left((x+y)+(x-y)\right)$, for the left-hand side of \eqref{equ:ingegrate x,nabla w}, one has the following identity
	\begin{align*}
		LHS=&\frac{1}{2}\int_{B_R(0)}K(x)e^{nw(x)}\left[-\frac{2}{\omega_n}\int_{\mr^n}K(y)e^{nw(y)}\ud y\right]\ud x   \\
		&+\frac{1}{2}\int_{B_R(0)}K(x)e^{nw(x)}\left[-\frac{2}{\omega_n}\int_{\mr^n}\frac{\langle x+y,x-y\rangle}{|x-y|^2}K(y)e^{nw(y)}\ud y\right]\ud x.
	\end{align*}
Now, we deal with  the last term of above equation by changing variables $x$ and $y$. 
	\begin{align*}
		&	\int_{B_R(0)}K(x)e^{nw(x)}\left[-\frac{2}{\omega_n}\int_{\mr^n}\frac{\langle x+y,x-y\rangle}{|x-y|^2}K(y)e^{nw(y)}\ud y\right]\ud x\\
		=&\int_{B_R(0)}K(x)e^{nw(x)}\left[-\frac{2}{\omega_n}\int_{\mr^n\backslash B_R(0)}\frac{\langle x+y,x-y\rangle}{|x-y|^2}K(y)e^{nw(y)}\ud y\right]\ud x\\
		=&\int_{ B_{R/2}(0)}K(x)e^{nw(x)}\left[-\frac{2}{\omega_n}\int_{\mr^n\backslash B_R(0)}\frac{\langle x+y,x-y\rangle}{|x-y|^2}K(y)e^{nw(y)}\ud y\right]\ud x\\
		&+\int_{B_R(0)\backslash B_{R/2}(0)}K(x)e^{nw(x)}\left[-\frac{2}{\omega_n}\int_{\mr^n\backslash B_{2R}(0)}\frac{\langle x+y,x-y\rangle}{|x-y|^2}K(y)e^{nw(y)}\ud y\right]\ud x\\
		&+\int_{B_R(0)\backslash B_{R/2}(0)}K(x)e^{nw(x)}\left[-\frac{2}{\omega_n}\int_{B_{2R(0)}\backslash B_{R}(0)}\frac{\langle x+y,x-y\rangle}{|x-y|^2}K(y)e^{nw(y)}\ud y\right]\ud x\\
		=:&I_1+I_2+I_3.
	\end{align*} Notice that 
	$$|I_1|\leq 3\frac{2}{\omega_n}\int_{ B_{R/2}(0)}|K(x)|e^{nw(x)}\ud x\int_{\mr^n\backslash B_R(0)}|K(y)|e^{nw(y)}\ud y $$
	and
	$$|I_2|\leq 3\frac{2}{\omega_n}\int_{ B_R(0)\backslash B_{R/2}(0)}|K(x)|e^{nw(x)}\ud x\int_{\mr^n\backslash B_{2R}(0)}|K(y)|e^{nw(y)}\ud y.$$
	Then both $|I_1|$ and $|I_2|$ tend to zero as $R\to \infty$ due to $Ke^{nw}\in L^1(\mr^n)$.
	
	Now,  we focus on the term  $I_3$. With help of Lemma \ref{lem:alpha}, for each $x\in B_R(0)\backslash B_{R/2}(0)$ and $K$ satisfying $\mathbf{(PCC)}$, one has $|K|e^{nw(x)}\leq C|x|^{-n}$ and then
	\begin{align*}
		&\int_{B_{2R(0)}\backslash B_{R}(0)}\frac{| x+y|}{|x-y|}|K(y)|e^{nw(y)}\ud y\\
		\leq &CR^{1-n}\int_{B_{2R(0)}\backslash B_{R}(0)}\frac{1}{|x-y|}\ud y\\
		\leq &CR^{1-n}\int_{B_{3R}(0)}\frac{1}{|y|}\ud y\leq C.
	\end{align*} 
	Thus
	$$|I_3|\leq C\int_{ B_R(0)\backslash B_{R/2}(0)}|K|e^{nw}\ud x\to 0 $$
	as $R\to \infty.$
	
	Then we have
	$$LHS\to -\frac{1}{2}\alpha\int_{\mr^n}K(y)e^{nw(y)}\ud y.$$
	As for the right-hand side of \eqref{equ:ingegrate x,nabla w}, by using divergence theorem, we have
	\begin{align*}
		RHS=&\frac{1}{n}\int_{B_R(0)}K(x)\langle x,\nabla e^{nw(x)}\rangle\ud x   \\
		=&-\int_{B_R(0)}\left( K(x)+\frac{1}{n}\langle x,\nabla K(x)\rangle\right)e^{nw(x)}\ud x\\
		&+\frac{1}{n}\int_{\partial B_R(0)}K(x)e^{nw(x)}R\ud \sigma.
	\end{align*}
	Since $K(x)e^{nw(x)}\in L^1(\mr^n)$, the last term of above equation tends to zero as $R\to\infty$ suitably.  Precisely, there exist a sequence $R_i\to \infty$ such that 
	$$\lim_{i\to\infty}R_i\int_{\partial B_{R_i}(0)}Ke^{nw}\ud\sigma=0.$$
	Then 
	$$\lim_{i\to\infty}\frac{4}{n\omega_n}\int_{B_{R_i}(0)}x\cdot \nabla K e^{nw}\ud x= \alpha(\alpha-2).$$
	Due to $x\cdot \nabla K\leq 0$,  we have
	$$\frac{4}{n\omega_n}\int_{\mr^n}x\cdot\nabla K e^{nw}\ud x=\alpha(\alpha-2).$$
	
\end{proof}

Combining Lemma \ref{lem:Pohozaev indentity} and Lemma \ref{lem:alpha}, it seems that we have got our desired result based on the assumption $x\cdot\nabla K\leq 0$. Actually, if we just consider the solution of the integral equation, we have gotten the non-existence theorem which generalizes a result of Hyder and Martinazzi \cite{HM}.
\begin{theorem}(Proposition 2.8  in \cite{HM})\label{thm:HM}
	For $K(x)=1-|x|^p,\;x\in \mathbb{R}^4$, $p\geq 4$, there is no solution to the integral eqaution
	$$u(x)=\frac{1}{8\pi^2}\int_{\mr^4}\log(\frac{|y|}{|x-y|})K(y)e^{4u(y)}\ud y +c$$
	for some $c\in \mathbb{R}$ with $Ke^{4u}\in L^1(\mathbb{R}^4)$.
\end{theorem}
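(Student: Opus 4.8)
The plan is to obtain Theorem~\ref{thm:HM} as a direct consequence of Lemma~\ref{lem:alpha} and Lemma~\ref{lem:Pohozaev indentity} with $n=4$, once the equation is put into the normalised form \eqref{equ:integral equation}.

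First I would strip off the additive constant: if $u,c$ solve the integral equation of Theorem~\ref{thm:HM} with $Ke^{4u}\in L^1(\mr^4)$, $K(x)=1-|x|^p$, then $w:=u-c$ solves \eqref{equ:integral equation} with $n=4$ and with $K$ replaced by $\widetilde K(x):=e^{4c}\bigl(1-|x|^{p}\bigr)$, while $\widetilde K e^{4w}=Ke^{4u}\in L^1(\mr^4)$. Next I would check that $\widetilde K$ meets the hypotheses of the two lemmas: since $p\ge 4>1$ we have $\widetilde K\in C^1(\mr^4)$; a direct computation gives $\langle x,\nabla\widetilde K(x)\rangle=-e^{4c}p|x|^{p}\le 0$; and, $\widetilde K$ being radial with $|\widetilde K(x)|=e^{4c}\bigl||x|^{p}-1\bigr|$, for $|x|\ge 2^{1/p}$ one has $\tfrac12 e^{4c}|x|^{p}\le|\widetilde K(x)|\le e^{4c}|x|^{p}$, so $\widetilde K$ satisfies $\mathbf{(PCC)}$ with $\gamma=p\ge 4=n$ (the cone being all of $\mr^4$). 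By Lemma~\ref{appen.1}, $w\in C^1(\mr^4)$. The one remaining hypothesis, that $w$ be bounded from above, is the point requiring care; I would obtain it (as in the normal–solution setting of \cite{HM}) from $\widetilde K e^{4w}\in L^1(\mr^4)$ together with the fact that $\widetilde K^{+}$ is bounded and compactly supported, using the potential bounds developed in the proof of Lemma~\ref{lem:alpha}.

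With the hypotheses in place, the contradiction is a short sign dichotomy. Lemma~\ref{lem:alpha} gives $\alpha\ge 1+\gamma/n=1+p/4\ge 2$, hence $\alpha(\alpha-2)\ge 0$. On the other hand, the Pohozaev identity \eqref{equ:Pohozaev identity} of Lemma~\ref{lem:Pohozaev indentity} gives
\[
\alpha(\alpha-2)=\frac{4}{n\omega_n}\int_{\mr^4}\langle x,\nabla\widetilde K(x)\rangle e^{4w(x)}\,\ud x=-\frac{4p\,e^{4c}}{n\omega_n}\int_{\mr^4}|x|^{p}e^{4w(x)}\,\ud x .
\]
Since this is, by the lemma, a finite real number and $e^{4w}>0$ everywhere on $\mr^4$, the integral on the right is finite and strictly positive, so $\alpha(\alpha-2)<0$, i.e.\ $0<\alpha<2$ — impossible. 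This contradiction proves Theorem~\ref{thm:HM}.

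Thus the substantive content is carried entirely by Lemmas~\ref{lem:alpha} and~\ref{lem:Pohozaev indentity}: verifying $\mathbf{(PCC)}$ for $1-|x|^{p}$ is trivial by radial symmetry (with $\gamma=p\ge n$), and the rest reduces to the incompatibility of $\alpha\ge 1+p/4\ge 2$ with $\alpha(\alpha-2)<0$, the latter forced by the strict negativity of $x\cdot\nabla\widetilde K$ away from the origin. I expect the main obstacle in this application to be not the dichotomy but the preliminary step of upgrading an arbitrary $L^1$‑solution of the integral equation to one satisfying the regularity and upper‑bound assumptions needed to invoke those lemmas.
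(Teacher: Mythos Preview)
Your proposal is correct and follows the same route as the paper: rescale to absorb the constant $c$ into $K$, verify that the rescaled curvature still satisfies $x\cdot\nabla K\le 0$ and $\mathbf{(PCC)}$ with $\gamma=p\ge n=4$, and then play Lemma~\ref{lem:alpha} (forcing $\alpha\ge 2$) against Lemma~\ref{lem:Pohozaev indentity} (forcing $\alpha(\alpha-2)<0$). The paper handles the one delicate point --- the a~priori upper bound $w\le C$ --- slightly differently from what you sketch: rather than proving $w\le C$ outright, it observes (see the Remark after Corollary~\ref{corollary of H-M}) that the only place this bound is used in Lemma~\ref{lem:alpha} is to obtain \eqref{K^+upper bound}, and when $K^{+}$ has compact support (as here, $\mathrm{supp}\,K^{+}\subset\overline{B_1}$) that estimate is vacuous for large $|x|$; your version derives the upper bound from the same observation, so the two arguments coincide in substance.
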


Actually, in this sepcial case $K(x)=1-|x|^p$  with $p\geq 4$, it is easy to check that  $x\cdot\nabla K\leq 0$  and $K(x)$ satisfies $\mathbf{(PCC)}$. As for the harmless constant $c$, we can just do a scaling over $K$ like the strategy taken in the proof of Theorem \ref{thm:general theorem} such that $c=0$. It is obvious that the conditions $x\cdot\nabla K\leq 0$ and $\mathbf{(PCC)}$ still hold after a scaling.  With help of Lemma \ref{lem:Pohozaev indentity} and Lemma \ref{lem:alpha}, we have the following corollary.

\begin{corollary}\label{corollary of H-M}
	Given a function $K(x)\in C^1(\mr^n)$ satisfying $\mathbf{(PCC)}$ and $x\cdot\nabla K(x)\leq 0$. There is no   solution to the integral equation \eqref{equ:integral equation} with $Ke^{nu}\in L^1(\mathbb{R}^n)$, $u\in L^\infty_{loc}(\mr^n)$ and  $u\leq C$.
\end{corollary}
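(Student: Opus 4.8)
The plan is to obtain a contradiction by combining Lemma~\ref{lem:alpha} and Lemma~\ref{lem:Pohozaev indentity}, so that essentially no new analysis is needed. Suppose $u$ were such a solution of \eqref{equ:integral equation}. Since $u\le C$, $u\in L^\infty_{loc}(\mr^n)$ and $Ke^{nu}\in L^1(\mr^n)$, both lemmas apply with $w=u$; in particular, by the regularity step inside the proof of Lemma~\ref{lem:Pohozaev indentity}, $u\in C^1(\mr^n)$, so $e^{nu}>0$ pointwise.

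First I would record the two facts the lemmas provide. Lemma~\ref{lem:alpha} gives $\alpha\ge 1+\gamma/n\ge 2$, where $\gamma\ge n$ is the exponent in $\mathbf{(PCC)}$. Lemma~\ref{lem:Pohozaev indentity} gives $\alpha(\alpha-2)=\tfrac{4}{n\omega_n}\int_{\mr^n}\langle x,\nabla K\rangle e^{nu}\,\ud x$; since $x\cdot\nabla K\le 0$ and $e^{nu}>0$, the right-hand side is $\le 0$, so $\alpha(\alpha-2)\le 0$, and because $\alpha\ge 2>0$ this forces $\alpha\le 2$, hence $\alpha=2$. Feeding $\alpha=2$ back into the lower bound yields $\gamma\le n$, hence $\gamma=n$.

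The one point that takes a moment — and where I expect the (mild) crux to be — is the borderline case just reached. With $\alpha=2$ the Pohozaev identity reduces to $\int_{\mr^n}\langle x,\nabla K\rangle e^{nu}\,\ud x=0$; since the integrand is $\le 0$ and $e^{nu}>0$ everywhere, we get $\langle x,\nabla K\rangle\equiv 0$ on $\mr^n$. Integrating $\tfrac{\ud}{\ud t}K(tx)=t^{-1}\langle tx,\nabla K(tx)\rangle=0$ shows $K$ is homogeneous of degree $0$, so $\sup_{\mr^n}|K|=\sup_{\mathbb S^{n-1}}|K|<\infty$ by continuity. But $\mathbf{(PCC)}$ with $\gamma=n\ge 2$ forces $|K(x)|\ge c_1|x|^n\to\infty$ as $|x|\to\infty$ inside the cone $C_\theta$ — a contradiction. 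Since every genuine estimate already lives in the two lemmas, I do not anticipate any real difficulty; the whole proof is a short sign argument together with this homogeneity remark, which is what closes the degenerate case $\alpha=2=1+\gamma/n$. (In concrete instances such as $K=1-|x|^p$ with $p\ge n$, the integral $\int_{\mr^n}\langle x,\nabla K\rangle e^{nu}\,\ud x$ is in fact strictly negative, so $\alpha<2$ is immediate and the homogeneity step is not even needed; the degenerate case is only there for the general statement.)
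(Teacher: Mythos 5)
Your proposal is correct and follows essentially the same route as the paper, which likewise deduces the corollary by combining Lemma~\ref{lem:alpha} ($\alpha\ge 2$) with the Pohozaev identity of Lemma~\ref{lem:Pohozaev indentity}. The only cosmetic difference is that the paper asserts directly that $\int_{\mr^n}\langle x,\nabla K\rangle e^{nu}\,\ud x<0$ (strictly), forcing $0<\alpha<2$, whereas you reach $\alpha=2$ first and then rule it out; both versions rest on the same observation that $x\cdot\nabla K\equiv 0$ would make $K$ bounded and thus violate $\mathbf{(PCC)}$, so your explicit handling of the borderline case is just a more careful spelling-out of the paper's one-line claim.
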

\begin{remark}
	We should point out that we assume $u\leq C$ in addition compared with Theorem \ref{thm:HM}. Actually, if $K^+$ has compact support, we can remove this assumption by noticing that the estimate \eqref{K^+upper bound} is trivial in this case. With help of $u\leq C$, we could deal with the degenerate cases such as $K(x)=1-x_1^n$.
\end{remark}

 However, as for the conformal equation itself, we still need to study the connection between the integral equation and the conformal equation.
Before we finish our proof of main theorems, we establish the following two useful lemmas for readers' convenience.  
\begin{lemma}\label{lem:polynomial satisfy PCC}
	Suppose $p(x)$ is a polynomial on $\mr^n$ and $deg(p(x))\geq n$, then $p(x)$ satisfies $\mathbf{(PCC)}$.
\end{lemma}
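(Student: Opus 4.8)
The plan is to take $\gamma=d:=\deg(p)$, which is $\geq n$ by hypothesis, and to exhibit the cone using the top-degree homogeneous part of $p$. First I would decompose $p(x)=\sum_{k=0}^{d}p_k(x)$, where each $p_k$ is a homogeneous polynomial of degree $k$ and $p_d\not\equiv 0$. The upper bound is immediate: for $|x|\geq 1$ one has $|p_k(x)|\leq\bigl(\max_{S^{n-1}}|p_k|\bigr)|x|^k\leq\bigl(\max_{S^{n-1}}|p_k|\bigr)|x|^d$, so summing gives $|p(x)|\leq c_2|x|^d$ for all $|x|\geq 1$, and we may take $R_1\geq 1$.

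For the lower bound I would use that $p_d$ is a nonzero homogeneous polynomial, hence does not vanish identically on the unit sphere: pick $\omega_0\in S^{n-1}$ with $p_d(\omega_0)\neq 0$. By continuity of $p_d$ there exist $\delta>0$ and an open spherical neighborhood $U\subset S^{n-1}$ of $\omega_0$ with $|p_d(\omega)|\geq\delta$ for all $\omega\in U$. Let $C_\theta:=\{t\omega:\ t>0,\ \omega\in U\}$; this is a cone with vertex at the origin and some opening angle $\theta>0$ determined by $U$. On $C_\theta$, writing $x=r\omega$ with $r=|x|$ and $\omega\in U$, homogeneity gives
$$|p(x)|\geq r^d|p_d(\omega)|-\sum_{k=0}^{d-1}r^k|p_k(\omega)|\geq \delta r^d-C_0 r^{d-1}$$
for $r\geq 1$, where $C_0:=\sum_{k=0}^{d-1}\max_{S^{n-1}}|p_k|$. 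Choosing $R_1\geq\max\{1,\,2C_0/\delta\}$ forces $\delta r^{d}-C_0 r^{d-1}\geq\tfrac{\delta}{2}r^{d}$ for $r\geq R_1$, so $|p(x)|\geq c_1|x|^d$ on $C_\theta\setminus B_{R_1}(0)$ with $c_1:=\delta/2$. Together with the upper bound and $\gamma=d\geq n$, this is exactly $\mathbf{(PCC)}$.

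There is no serious obstacle here; the only point requiring a little care is verifying that the set $C_\theta$ built from a spherical neighborhood is genuinely a cone with vertex at the origin and strictly positive angle, which follows from the openness of $U$ in $S^{n-1}$, and that the lower-order terms are absorbed for large $|x|$, which is the elementary estimate above. One should also note explicitly that $p_d\not\equiv0$ is precisely the statement $\deg(p)=d$, so no hypothesis beyond $\deg(p)\geq n$ is used.
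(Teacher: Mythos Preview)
Your proof is correct and follows essentially the same route as the paper's: isolate the top-degree homogeneous part $p_d$, pick a spherical neighborhood where $|p_d|$ is bounded below to generate the cone, and absorb the lower-order terms for large $|x|$. The only cosmetic differences are that you decompose into all homogeneous components rather than writing $p=p_d+l$, and you explicitly verify the upper bound $|p(x)|\leq c_2|x|^d$, which the paper leaves implicit.
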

\begin{proof}
	Denote $k:=deg(p(x))\geq n$ and separate $p(x)$ as 
	$$p(x)=p_k(x)+l(x)$$
	where $p_k(x)$ is a homogeneous polynomial of degree $k$ and $\deg (l(x))\leq k-1$.
	We choose  the polar coordinate $(r,\theta)$ with $r\geq 0$ and $\theta\in\mathbb{S}^{n-1}$. Then one has
	$$p_k(x)=r^{k}\varphi_k(\theta)$$ where $\varphi_k(\theta)$ is a non-zero smooth function defined on $\mathbb{S}^{n-1}$. There exist $c_1>0$ and a geodesic ball $B_{r_1}(\theta_0)\subset\mathbb{S}^{n-1}$ such that
	$$|\varphi_k(\theta)|\geq 2c_1>0$$
	for any $\theta\in B_{r_1}(\theta_0)$.  Then 
	$$|p(x)|\geq2c_1|x|^k-c_2|x|^{k-1}$$
	where $c_2>0$ is a constant depending only on the cofficients of $l(x)$. We choose $R_1=\max\{1,\frac{c_2}{c_1}\}$ and then one has
	$$|p(x)|\geq c_1|x|^k+|x|^{k-1}(c_1|x|-c_2)\geq c_1|x|^k\geq c_1|x|^n$$
	for any $x\in [R_1,+\infty)\times B_{r_1}(\theta_0)$. 
\end{proof}

For readers' convenience, we slightly improve   Theorem 5 in \cite{Ma} to be used later which is a generalization of Theorem 2.3 in \cite{ARS} with help of Pizzetti’s formula \cite{Pi}.
\begin{lemma}\label{lem:Pizzetti's formula}
	(Theorem 5 in \cite{Ma})
	Consider $h:\mr ^n\rightarrow \mr$ with $\Delta^m h=0$ and $h(x)\leq C(1+|x|^l)$, for some  $l\geq 0$. Then $h(x)$ is a polynomial of degree at most $\max\{l,2m-2\}$.
\end{lemma}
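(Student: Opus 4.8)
The plan is to show directly that $D^\alpha h\equiv 0$ for every multi-index $\alpha$ with $|\alpha|>\max\{l,2m-2\}$; the conclusion then follows at once, since a $C^\infty$ function whose derivatives of some fixed order $k$ all vanish identically is a polynomial of degree $\leq k-1$. Two ingredients enter. The first is the interior estimate for the polyharmonic operator: if $\Delta^m g=0$ on $B_{2R}(x_0)$, then
$$|D^\alpha g(x_0)|\leq \frac{C(n,m,|\alpha|)}{R^{n+|\alpha|}}\int_{B_R(x_0)}|g|\,\ud y .$$
I would obtain this by rescaling to the unit ball and using that $\Delta^m g=0$ forces $g$ to be smooth with every norm on a smaller ball controlled by the $L^1$ norm on a larger one — for instance via the harmonicity of $\Delta^{m-1}g$ combined with iteration and the mean value property, or via the constant-coefficient interior $L^p$-estimates for $\Delta^m$ together with Sobolev embedding. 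The second ingredient is Pizzetti's formula \cite{Pi}: since $h$ is polyharmonic of order $m$, hence real-analytic, for every $x$ and every $R>0$
$$\int_{B_R(x)}h\,\ud y=|B_R|\sum_{j=0}^{m-1}c_j\,R^{2j}\,\Delta^j h(x),\qquad c_j=c_j(n)>0,\ c_0=1,$$
from which $\int_{B_R(x_0)}h\,\ud y\geq -A(x_0)\,R^{\,n+2m-2}$ for all $R\geq 1$, with $A(x_0)$ depending only on the finitely many numbers $|\Delta^j h(x_0)|$, $0\leq j\leq m-1$.

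Next I would fix $x_0\in\mr^n$, take $R\geq\max\{1,|x_0|\}$, and apply the interior estimate to $h$ itself: for $|\alpha|\geq 1$,
$$|D^\alpha h(x_0)|\leq \frac{C(n,m,|\alpha|)}{R^{n+|\alpha|}}\int_{B_R(x_0)}|h|\,\ud y=\frac{C(n,m,|\alpha|)}{R^{n+|\alpha|}}\left(2\int_{B_R(x_0)}h^+\,\ud y-\int_{B_R(x_0)}h\,\ud y\right),$$
using $|h|=2h^+-h$. For $y\in B_R(x_0)$ one has $|y|\leq 2R$, so the hypothesis $h(y)\leq C(1+|y|^l)$ gives $\int_{B_R(x_0)}h^+\,\ud y\leq C'R^{\,n+l}$, while Pizzetti's formula gives $-\int_{B_R(x_0)}h\,\ud y\leq A(x_0)R^{\,n+2m-2}$; since $R^{\,n+l}+R^{\,n+2m-2}\leq 2R^{\,n+\max\{l,2m-2\}}$ for $R\geq 1$, this yields
$$|D^\alpha h(x_0)|\leq C(x_0,n,m,|\alpha|)\;R^{\,\max\{l,2m-2\}-|\alpha|}.$$
For $|\alpha|>\max\{l,2m-2\}$ I would then let $R\to\infty$ to conclude $D^\alpha h(x_0)=0$; as $x_0$ is arbitrary, $D^\alpha h\equiv 0$ whenever $|\alpha|\geq\lfloor\max\{l,2m-2\}\rfloor+1$, so $h$ is a polynomial of degree at most $\max\{l,2m-2\}$.

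I do not expect a genuine obstacle in this scheme. The step that needs the most care is the precise quantitative form of the interior estimate in the first paragraph — this is standard elliptic regularity for the constant-coefficient operator $\Delta^m$, but the dependence on $R$ and the reduction from an $L^\infty$ bound to an $L^1$ bound on the larger ball (e.g. by expressing $g(x)$ as a fixed linear combination of its solid means over $m$ distinct small radii, via the Vandermonde structure of Pizzetti's formula) should be spelled out — as should the exact positive constants $c_j$ in Pizzetti's formula. Finally, I would remark that the exponent $2m-2$ is sharp (for example $h(x)=-|x|^2$ is biharmonic and bounded above yet has degree $2=2\cdot 2-2$), which is exactly why the term $R^{\,n+2m-2}$, produced by the lower Laplacians $\Delta^j h(x_0)$ in Pizzetti's formula, cannot be discarded.
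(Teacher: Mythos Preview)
Your argument is correct and is precisely the content of Martinazzi's Theorem~5 that the paper invokes: the interior estimate for $\Delta^m$ (this is Proposition~4 in \cite{Ma}), the identity $|h|=2h^+-h$, the one-sided bound on $h^+$ from the hypothesis, and Pizzetti's formula to control $-\fint_{B_R}h$ by $\sum_{j\le m-1}c_jR^{2j}|\Delta^j h(x_0)|$; letting $R\to\infty$ kills all derivatives of order exceeding $\max\{l,2m-2\}$. The paper's own ``proof'' of this lemma is not an independent argument but a two-line reduction to \cite{Ma}: for $l\ge 2m-2$ it is exactly the cited theorem, and for $l<2m-2$ one simply bounds $1+|x|^l$ by a multiple of $1+|x|^{2m-2}$ to land in the first case --- so you have written out in full what the paper outsources, and indeed the paper later (in the proof of Theorem~\ref{thm:higher order}) reproduces exactly your computation when applying the lemma to $v$.
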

 \begin{proof}
 	If  $l\geq 2m-2$, there is nothing to do because this is exactly the result in \cite{Ma} which concludes that $h(x)$ is a polynomial of degree at most $l$. If $l<2m-2$, then we simply bound $h$ from above by a multiple of $1+|x|^{2m-2}$ which returns to the former case. Thus $h(x)$ is a polynomial of degree at most $\max\{l,2m-2\}$.
 \end{proof}

\section{Proof of Theorems}

{\bf Proof of Theorem \ref{thm:general theorem}:}
\begin{proof}
	Following Chen-Li \cite{CL91}, we introduce
	$$\tilde u(x)=\frac{1}{2\pi}\int_{\mr^2}\log\frac{|y|}{|x-y|}K(y)e^{2u}\ud y.$$
	Then the function $v:=u-\tilde u$ is harmonic.
 By using mean value  property of harmonic function, one has
 $$v(x_0)=\frac{1}{|B_1(x_0)|}\int_{B_1(x_0)}(u(x)-\tilde u(x))\ud x\leq C+C\log(2+|x_0|)$$
 where we have used the estimate \eqref{|u|} and the assumption $u\leq C$.
   Therefore, $v$ must be a constant.  This is a very classical result, which can be also obtained from Lemma \ref{lem:Pizzetti's formula} by choosing
$m=1$ and $l=1$.  It is not hard to check $v(x)\leq C(1+|x|)$. Then $v(x)$ is a polynomial of degree at most one. However,  the estimate $v(x)\leq C+C\log(2+|x|)$ rules out the case degree of one.
		Hence $\tilde u=u+C_1$ for $C_1\in\mr$. Then
		$$\tilde u=\frac{1}{2\pi}\int_{\mr^2}\log\frac{|y|}{|x-y|}K(y)e^{-2C_1}e^{2\tilde u}\ud y$$ satisfying the integral equation \eqref{equ:integral equation}.
		Then $K(y)e^{-2C_1}e^{2\tilde u}\in L^1(\mr^2)$ and $K(y)e^{-2C_1}$ satisfies $\mathbf{(PCC)}$ which shows that
		$$\frac{1}{2\pi}\int_{\mr^2}\langle x,\nabla K(x)\rangle e^{2u(x)}\ud x=\frac{1}{2\pi}\int_{\mr^2}\langle x,\nabla( K(x)e^{-2C_1})\rangle e^{2\tilde u(x)}\ud x=\alpha(\alpha-2)$$
		where $\alpha=\frac{1}{2\pi}\int_{\mr^2}K(x)e^{-2C_1}e^{2\tilde u}\ud x=\frac{1}{2\pi}\int_{\mr^2}K(x)e^{2 u}\ud x$. Since $x\cdot\nabla K\leq 0$ and $K(x)$ satisfying $\mathbf{(PCC)}$, we have
		$$\frac{1}{2\pi}\int_{\mr^2}\langle x,\nabla K(x)\rangle e^{2u(x)}\ud x<0$$
		which concludes that $0<\alpha<2$ contradicting to Lemma \ref{lem:alpha}.
\end{proof}
Now we imitate the proof of Theorem \ref{thm:general theorem}  to give the proof of higher order case.

{\bf Proof of Theorem \ref{thm:higher order}:}
\begin{proof}
	Following the argument in \cite{Struwe21}, we 
	consider 
	$$\tilde u=\frac{\alpha_n}{\omega_n}\int_{\mr^n}\left(\log\frac{|y|}{|x-y|}\right)K(y)e^{nu(y)}\ud y$$
	where $\alpha_n$ is a positive constant such that
	$$(-\Delta)^{n/2}\tilde u=K(x)e^{nu(x)}.$$
	Actually, $\tilde u$ is also smooth based on the smoothness  assumption of $K(x)$ and $u(x)$ as well as $Ke^{nu}\in L^1(\mr^n)$.
	Following the argument of the proof of Theorem 1.2 in \cite{Li04}, we consider $B_4(x_0)$ for any $x_0\in\mr^n$. Choose a smooth cut-off function $0\leq \eta(x)\leq 1$ such that $\eta(x)=1$ in $B_2(x_0)$ and  vanishes outside $B_4(x_0)$. For $x\in B_1(x_0)$, we can split $\tilde u$ by
	\begin{align*}
		\tilde u(x)&=\frac{\alpha_n}{\omega_n}\int_{\mr^n}\left(\log\frac{|y|}{|x-y|}\right)\eta(y)K(y)e^{nu(y)}\ud y+\frac{\alpha_n}{\omega_n}\int_{\mr^n}\left(\log\frac{|y|}{|x-y|}\right)(1-\eta(y))K(y)e^{nu(y)}\ud y\\
		&=\frac{\alpha_n}{\omega_n}\int_{\mr^n}\left(\log\frac{|y|}{|x-y|}\right)\eta(y)K(y)e^{nu(y)}\ud y+\frac{\alpha_n}{\omega_n}\int_{\mr^n\backslash B_2(x_0)}\left(\log\frac{|y|}{|x-y|}\right)(1-\eta(y))K(y)e^{nu(y)}\ud y\\
		&=:\uppercase\expandafter{\romannumeral1}(x)+\uppercase\expandafter{\romannumeral2}(x)
	\end{align*}
Since $\eta(x)K(x)e^{nu}\in C_c^\infty(\mr^n)$, with help of Theorem 10.3 in \cite{LL}, we have $\uppercase\expandafter{\romannumeral1}(x)\in C^\infty(B_1(x_0))$. Since $(1-\eta)Ke^{nu}\in L^1(\mr^n)$, we can differentiate $\uppercase\expandafter{\romannumeral2}(x)$ under the integral sign for $x\in B_1(x_0)$, so $\uppercase\expandafter{\romannumeral2}(x)\in C^\infty(B_1(x_0))$. Thus $\tilde u\in C^\infty(\mr^n)$. 
	Set $v=u-\tilde u$ and then $(-\Delta)^{n/2}v=0$.

	 If we show that $v$ is a constant, combining Lemma \ref{lem:Pohozaev indentity} and \ref{lem:alpha} will show the contradiction as the proof of Theorem \ref{thm:general theorem}.
	 
	  Direct computation yields that
	\begin{align*}
		|\int_{B_1(x_0)}(-\Delta )\tilde u\ud x|=&\frac{\alpha_n}{\omega_n}|\int_{B_1(x_0)}\int_{\mr^n}(-\Delta)_x\left(\log\frac{1}{|x-y|}\right)K(y)e^{u(y)}\ud y\ud x|   \\
		\leq & C\int_{\mr^n}\int_{B_1(x_0)}\frac{|K(y)|e^{nu(y)}}{|x-y|^{2}}\ud x\ud y\\
		=&C\int_{B_{|x_0|/2}(0)}\int_{B_1(x_0)}|K(y)|\frac{e^{nu(y)}}{|x-y|^{2}}\ud x\ud y\\
			&+  C\int_{\mr^n\backslash (B_{|x_0|/2}(0)\cup B_2(x_0))}\int_{B_1(x_0)}|K(y)|\frac{e^{nu(y)}}{|x-y|^{2}}\ud x\ud y\\
			&+C\int_{B_2(x_0)}\int_{B_1(x_0)}|K(y)|\frac{e^{nu(y)}}{|x-y|^{2}}\ud x\ud y  \\
			\leq & C(\frac{|x_0|}{2}-1)^{-2}+C\int_{\mr^n\backslash B_{|x_0|/2}(0)}K(y)e^{nu(y)}\ud y\\
			&+C\int_{B_2(x_0)}\int_{B_4(y)}|K(y)|\frac{e^{nu(y)}}{|x-y|^{2}}\ud x\ud y\\
			\leq& C(\frac{|x_0|}{2}-1)^{-2}+C\int_{\mr^n\backslash B_{|x_0|/2}(0)}K(y)e^{nu(y)}\ud y\\
			&+C\int_{B_2(x_0)}|K(y)|e^{nu(y)}\ud y\to 0 \quad \mathrm{as}\;|x_0|\to \infty.
	\end{align*}
Combing with the assumption \eqref{condtion laplican }, we have
\begin{equation}\label{Delta v to 0}
	|\int_{B_1(x_0)}\Delta v\ud x|\to 0,\quad  \mathrm{as} \; |x_0|\to \infty.
\end{equation}
With help of \eqref{|u|}, one has 
\begin{equation}\label{equ: v upper bound in higher order}
	\int_{B_1(x_0)}v^+\ud x\leq\int_{B_1(x_0)}u^++|\tilde u|\ud x \leq C+C\log(2+|x_0|).
\end{equation}
If $x_0$ fixed and $R>>|x_0|$, we could choose suitable  $B_1(x_i)$ where $x_i\in B_{2R}(0)$ such that $B_R(x_0)\subset \cup^N_{i=1}B_1(x_i)$ and   $B_{1/5}(x_i)$ are disjoint as well as  $\cup^N_iB_{1/5}(x_i)\subset B_{3R}(0)$.Then
\begin{align*}
	\frac{1}{|B_R(x_0)|}\int_{B_R(x_0)}v^+(y)\ud y\leq&\frac{1}{|B_R(x_0)|}\sum^N_{i=1}\int_{B_1(x_i)}v^+\ud x\\
	\leq &C\frac{1}{|B_R(x_0)|}\sum^N_{i=1}\int_{B_1(x_i)}\log (2R+2)\ud x\\
	=&C\frac{5^n}{|B_R(x_0)|}\int_{\cup^N_iB_{1/5}(x_i)}\ud x\log(2R+2)\\
	\leq &C\log(2R+2)
\end{align*}
Following the argument of Theorem 5 in \cite{Ma} and using Propostion 4 in \cite{Ma}, we have for any $x\in \mr^n$ fixed and $R>>|x|$,
\begin{align*}
	|D^{n-1}v(x)|\leq& \frac{C}{R^{n-1}}\frac{1}{|B_R(x)|}\int_{B_R(x)}|v(y)|\ud y\\
	=&\frac{C}{R^{n-1}}\left(-\frac{1}{|B_R(x)|}\int_{B_R(x)}v(y)\ud y+2\frac{1}{|B_R(x)|}\int_{B_R(x)}v^+(y)\ud y\right)\\
	\leq &\frac{C}{R^{n-1}}\left(\sum^{n/2-1}_{i=0}c_iR^{2i}\Delta^ih(x)+\log (2R+2)\right)\\
	=&O(R^{-1})
\end{align*}
where we use Pizzetti's formula for polyhamonic function $\Delta^{n/2}v=0$ (See Lemma 3 in \cite{Ma}).
Then by taking  $R\to\infty$, one has $D^{n-1}v(x)=0$ which implies that  $v$ is a polynomial of degree at most $n-2$.

Then $\Delta v$ is a polynomial of degree at most $n-4$. Applying the estimate \eqref{Delta v to 0} to get that $\Delta v=0$. Then recall the estimate \eqref{equ: v upper bound in higher order} to get that $v$ must be a constant similar to the argument in the proof of Theorem \ref{thm:general theorem}.
The remian argument is the same as that of Theorem \ref{thm:general theorem}. Finally, we complete our proof. 
\end{proof}

\appendix
\section{Appendix}

\begin{lemma}\label{appen.1}
Suppose $n$ is an even integer and  $K(x)\in C^{k,\alpha}_{loc}(\mr^n),$ with $k\geq 0$ and $0<\alpha<1$, each  solution to the intergal equation 
	\begin{equation}\label{equ:integral equation2}
	u(x)=\frac{2}{\omega_n}\int_{\mr^n}\left(\log\frac{|y|}{|x-y|}\right)K(y)e^{nu(y)}\ud y
\end{equation}
with $Ke^{nu}\in L^1(\mr^n)$ and $u\in L^\infty_{loc}(\mr^n)$, then  $u$  belongs to $C^{n+k,\alpha}_{loc}(\mr^n)$. In particularly, if $K(x)$ is smooth, so is $u(x)$.
\end{lemma}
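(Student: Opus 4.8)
The plan is to run a bootstrap argument based on the observation that $\tfrac{2}{\omega_n}\log\tfrac1{|x-y|}$ is the fundamental solution of the polyharmonic operator $(-\Delta)^{n/2}$ on $\mr^n$ for even $n$ (the normalization used throughout the paper), so that, after a cut-off, the integral equation becomes a \emph{local} polyharmonic equation accessible to classical Schauder and $L^p$ estimates. Since the conclusion is local, fix $x_0\in\mr^n$ and work on $B_1(x_0)$. Choose $\eta\in C_c^\infty(\mr^n)$ with $\eta\equiv 1$ on $B_2(x_0)$ and $\operatorname{supp}\eta\subset B_4(x_0)$, and split $u=u_1+u_2$ with
$$u_i(x)=\frac{2}{\omega_n}\int_{\mr^n}\log\frac{|y|}{|x-y|}\,\psi_i(y)\,K(y)e^{nu(y)}\,\ud y,\qquad \psi_1=\eta,\ \ \psi_2=1-\eta,$$
exactly as in the proof of Theorem \ref{thm:higher order}. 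Write $g:=\eta\,Ke^{nu}$; since $u\in L^\infty_{loc}$ and $K\in C^{k,\alpha}_{loc}\subset L^\infty_{loc}$, the function $g$ is bounded with compact support, hence $g\in L^p(\mr^n)$ for all $p\in[1,\infty]$, while $Ke^{nu}\in L^1(\mr^n)$ by hypothesis.

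For $x\in B_1(x_0)$ the $x$-dependence of the integrand in $u_2$ involves only $y\in\operatorname{supp}(1-\eta)\subset\mr^n\setminus B_2(x_0)$, where $|x-y|\ge 1$; since $Ke^{nu}\in L^1(\mr^n)$ one may differentiate under the integral sign any number of times, so $u_2\in C^\infty(B_1(x_0))$ (the same computation as in the proof of Theorem \ref{thm:higher order}). It remains to treat $u_1$. Because $g$ has compact support, $\int_{\mr^n}\log|y|\,g(y)\,\ud y=:c_0$ is a finite constant and $u_1(x)=c_0-\tfrac{2}{\omega_n}\int_{\mr^n}\log|x-y|\,g(y)\,\ud y$; standard properties of the logarithmic potential of a bounded compactly supported function give $u_1\in C^0(\mr^n)$ (indeed $u_1\in C^{0,1}_{loc}$), so $u_1\in L^1_{loc}(\mr^n)$, and, the kernel being the fundamental solution of $(-\Delta)^{n/2}$,
$$(-\Delta)^{n/2}u_1=g\qquad\text{in }\mathcal D'(\mr^n).$$
Combining with $u_2\in C^\infty$ we already know $u\in C^0_{loc}(\mr^n)$, in particular $u\in C^{0,\alpha}_{loc}(\mr^n)$.

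Now I bootstrap. Assume $u\in C^{m,\alpha}_{loc}(\mr^n)$ for some integer $m\ge 0$ (true for $m=0$ by the previous paragraph). Then $e^{nu}\in C^{m,\alpha}_{loc}$, so the compactly supported function $g=\eta\,Ke^{nu}$ lies in $C^{\min(m,k),\alpha}_{loc}$; interior Schauder estimates for the polyharmonic equation $(-\Delta)^{n/2}u_1=g$ then give $u_1\in C^{n+\min(m,k),\alpha}_{loc}$, hence $u=u_1+u_2\in C^{n+\min(m,k),\alpha}_{loc}$. The regularity index of $u$ thus obeys the recursion $m\mapsto n+\min(m,k)$, which strictly increases as long as $m<k$ and equals $n+k$ once $m\ge k$; so after finitely many steps $u\in C^{n+k,\alpha}_{loc}(\mr^n)$ and the iteration stabilizes, the source $g$ never being smoother than $K$. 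Since $x_0$ was arbitrary this proves $u\in C^{n+k,\alpha}_{loc}(\mr^n)$, and if $K\in C^\infty$ there is no ceiling and the same iteration yields $u\in C^\infty(\mr^n)$.

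The main obstacle is the opening step: the hypothesis is only $u\in L^\infty_{loc}$, so $u$ cannot be fed into Schauder estimates directly, and one must first extract a modulus of continuity for $u$ purely from the integral representation and make the distributional identity $(-\Delta)^{n/2}u_1=g$ rigorous. This is where the borderline nature of the $\log$ kernel (the ``critical'' Riesz potential on $\mr^n$) has to be handled with care, namely its endpoint mapping properties $L^p_c\to W^{n,p}_{loc}$ and $C^{j,\alpha}_c\to C^{j+n,\alpha}_{loc}$; alternatively one obtains $u\in C^{0,1}_{loc}$ at once by differentiating the original integral equation (legitimate since $Ke^{nu}\in L^1\cap L^\infty_{loc}$) before introducing the cut-off. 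After that, the argument is the routine regularity-by-iteration scheme for conformally invariant integral equations, which can be organized following \cite{Li04} and \cite{CLO}; the only bookkeeping is the recursion $m\mapsto n+\min(m,k)$, which stabilizes exactly at $n+k$.
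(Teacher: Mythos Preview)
Your proof is correct and follows essentially the same scheme as the paper: localize by splitting the integral into a near part and a far part, show the far part is smooth by differentiation under the integral sign, show the near part satisfies $(-\Delta)^{n/2}u_1=g$ in the sense of distributions, obtain an initial regularity, and then bootstrap with Schauder. The only technical difference is in the initial regularity step: the paper, after deriving the distributional PDE for $u$, introduces a further Navier decomposition $u|_{B_4(x_0)}=v_1+v_2$ (following \cite{Ma}) and uses $L^p$ theory to get $v_1\in W^{n,p}$, hence $u\in C^{n-1,\beta}$ in one stroke, whereas you extract only $u_1\in C^{0,1}_{loc}$ directly from the logarithmic potential of a bounded compactly supported density and then run the Schauder iteration a few more times---a slightly longer bootstrap but an arguably more direct starting point.
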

\begin{proof}
	Given any $B_4(x_0)\subset\mr^n$, choose $R=2|x_0|+8$ and split the right side of the integral equation as
	\begin{align*}
		&\int_{\mr^n}\left(\log\frac{|y|}{|x-y|}\right)K(y)e^{nu(y)}\ud y\\
		=&\int_{B_R(0)}\left(\log\frac{|y|}{|x-y|}\right)K(y)e^{nu(y)}\ud y+\int_{\mr^n\backslash B_R(0)}\left(\log\frac{|y|}{|x-y|}\right)K(y)e^{nu(y)}\ud y\\
		=&:\uppercase\expandafter{\romannumeral1}(x)+\uppercase\expandafter{\romannumeral2}(x)
	\end{align*}
inspired by the proof of Theorem 1.2 in \cite{Li04}.
	It is not hard to check $|\log\frac{|y|}{|x-y|}|\leq C$ for $x\in B_4(x_0)$ and $y\in\mr^n\backslash B_R(0)$. Thus, $\uppercase\expandafter{\romannumeral2}(x)$ is well-defined for  $x\in B_4(x_0)$ with help of $Ke^{nu}\in L^1(\mr^n)$. Since we can can differentiate $\uppercase\expandafter{\romannumeral2}(x)$ under the integral sign for $x\in B_4(x_0)$, so $\uppercase\expandafter{\romannumeral2}(x)\in C^\infty(B_4(x_0))$.
	For some constant $\gamma_n$, $\frac{1}{\gamma_n}\log\frac{1}{|x|}$ is a fundamental solution of the operator $(-\Delta)^{n/2}$ in $\mr^n$ satisfying 
	\begin{equation}\label{Green's function}
		(-\Delta)^{n/2}\left(\frac{1}{\gamma_n}\log\frac{1}{|x|}\right)=\delta_0,\quad \mathrm{in}\;\mr^n.
	\end{equation}
 Due to \eqref{Green's function}, it is easy to see
	\begin{equation}\label{II(x)}
		(-\Delta)^{n/2}\uppercase\expandafter{\romannumeral2}(x)=0, x\in B_4(x_0).
	\end{equation}
	Since  $K\in C^{k,\alpha}_{loc}(\mr^n)$ and $u\in L^\infty_{loc}(\mr^n)$, both $\log|y| K(y)e^{nu(y)}$ and $\log|x-y|K(y)e^{nu(y)}$ are integrable over $B_R(0)$ for any $x\in B_4(x_0)$.  Then   
	$$\uppercase\expandafter{\romannumeral1}(x)=\int_{\mr^n}\log\frac{1}{|x-y|}K(y)e^{nu(y)}\chi_{B_R(0)}(y)\ud y+c_1$$
	where $c_1$ is a constant.
	With help of Theorem 6.21 in \cite{LL} and the fundamental solution \eqref{Green's function}, we have
 $$(-\Delta)^{n/2}\uppercase\expandafter{\romannumeral1}(x)=\gamma_nKe^{nu},\; x\in B_4(x_0)$$ 
 in the sense of distribution.
 Combing with \eqref{II(x)}, one has 
 $$(-\Delta)^{n/2}u(x)=\frac{2\gamma_n}{\omega_n}Ke^{nu},\; x\in B_4(x_0)$$
 in sense of distribution.
 
 Now, following the argument of Corollary 8 in \cite{Ma}, we write $u|_{B_4(x_0)}=v_1+v_2$, where 
 \begin{align*}
 	\left\{\begin{array}{ll}
 		(-\Delta)^{n/2}v_1=\frac{2\gamma_n}{\omega_n}Ke^{nu} & \mathrm{in}\; B_4(x_0)\\
 		v_1=\Delta v_1=\cdots=\Delta^{n/2-1}v_1=0 & \mathrm{on} \;\partial B_4(x_0)
 	\end{array}\right.
 \end{align*}
 and $\Delta^{n/2}v_2=0$. It is easy to see 
$Ke^{nu}\in L^{\infty}(B_4(x_0))$ due to $K\in C^{k,\alpha}_{loc}$ and $u\in L^{\infty}_{loc}$. Hence, for any $p>0$,  
one has  $v_1\in W^{n,p}(B_4(x_0))$. Then for sufficienty large $p$, Sobolev embedding theorem yields that $v_1\in C^{n-1,\beta}(B_4(x_0))$ for some $0<\beta<1$, while $v_2$ is smooth in $B_4(x_0)$. Thus $u\in C^{n-1,\beta}(B_2(x_0))$. Then with the same procedure, we can bootsrap and make use of Schauder's estimate to prove that $u\in C^{n+k,\alpha}(B_1(x_0))$ see Theorem 9.19 in \cite{GT}.
 
\end{proof}

\end{document}